\DeclareMathOperator{\PEG}    {PE_G}
\DeclareMathOperator{\PE}    {PE}
\DeclareMathOperator{\DE}    {DE}
\DeclareMathOperator{\DEG}    {DE_G}
\DeclareMathOperator{\row}    {row}
\DeclareMathOperator{\NCDF}    {NCDF}
\DeclareMathOperator{\MIX}    {MIX}
\DeclareMathOperator{\RGG}    {RGG}
\DeclareMathOperator{\RGGs}    {RGGs}
\newcommand{\set}[2]{\{ \, #1 \, | \, #2 \, \} } 
\newcommand{\card}[1]{\lvert#1\rvert}
\newcommand{\map}[3]{ #1 \colon #2 \rightarrow #3}
\newcommand{\G}{{G}}
\newcommand{\R}{\mathbb{R}} 
\newcommand{\V}{\mathcal{V}} 
\newcommand{\E}{\mathcal{E}} 
\newcommand{\Na}{\mathbb{N}}
\newcommand{\A}{\mathbf{A}}
\newcommand{\X}{\mathbf{X}} 
\newcommand{\N}{\mathbb{N}}
\newcommand{\DG}{ \overrightarrow{G}} 
\newcommand{\Y}{\mathbf{Y}} 
\newcommand{\y}{\mathbf{y}}
\newtheorem{proposition}{Proposition}
\DeclareMathOperator{\DEGD}    {DE_{\overrightarrow{G}}}
\begin{document}

\preprint{APS/123-QED}

\title{Dispersion entropy: A Measure of Irregularity for Graph Signals}

\author{John Stewart Fabila-Carrasco$^1$, Chao Tan$^2$, and Javier Escudero$^1$}
\affiliation{%
$^1$ School of Engineering, Institute for Digital Communications, University of Edinburgh, West Mains Rd, Edinburgh, EH9 3FB, UK.\\
$^2$ School of Electrical and Information Engineering, Tianjin University, Tianjin 300072, China
	}

\date{\today}

\begin{abstract}
We introduce a novel method, called Dispersion Entropy for Graph Signals, $\DEG$, as a powerful tool for analysing the irregularity of signals defined on graphs. We demonstrate the effectiveness of $\DEG$ in detecting changes in the dynamics of signals defined on synthetic and real-world graphs, by defining mixed processing on random geometric graphs or those exhibiting with small-world properties. 
Remarkably, $\DEG$ generalises the classical dispersion entropy for univariate time series, enabling its application in diverse domains such as image processing, time series analysis, and network analysis, as well as in establishing theoretical relationships (i.e., graph centrality measures, spectrum). 
Our results indicate that $\DEG$ effectively captures the irregularity of graph signals across various network configurations,  successfully differentiating between distinct levels of randomness and connectivity. Consequently, $\DEG$ provides a comprehensive framework for entropy analysis of various data types, enabling new applications of dispersion entropy not previously feasible, and revealing relationships between graph signals and its graph topology.
\end{abstract}

\maketitle
\textbf{Introduction.} 
Entropy is a fundamental tool for assessing irregularity and non-linear behaviour in data. Permutation entropy ($\PE$) is an effective algorithm for capturing dynamics in time series (1D data)~\cite{Bandt2002} and has been widely used in finance, physics, and biology~\cite{cao2004detecting}. However, $\PE$ considers only the order of values, discarding important amplitude information. Dispersion Entropy ($\DE$) was introduced to overcome this limitation~\cite{Rostaghi2016}, and has since been applied to EEG analysis~\cite{azami17} and rotary machines~\cite{rostaghi2019application}. 

The growing availability of data defined on complex networks, such as social networks~\cite{huang2018graph}, transportation systems~\cite{ortega18}, and industrial processes~\cite{fabila2022noise}, has driven interest in extending entropy metrics from time series to more general domains. Recently, $\PE$ has been extended to analyse images (2D data)~\cite{Morel2021} and irregular domains (graphs)~\cite{Fabila-Carrasco2022}. While $\DE$ has been defined for 2D data~\cite{azami19b}, there is no existing $\DE$ algorithm for analysing data defined on graphs. Such an extension would enable analysis of real-world systems with graph-based structure where classical $\DE$ was not previously applicable, providing a powerful framework for data analysis across a wide range of applications in Graph Signal Processing (GSP)~\cite{ortega18}. 

Smoothness is a fundamental property extensively studied in GSP~\cite{ortega18,shuman2013emerging,dong2016learning}, typically through the use of the combinatorial Laplacian's quadratic form. Intuitively, a graph signal is considered smooth if connected vertices exhibit similar values~\cite{dong2016learning}. Nonetheless, this definition may not fully capture the complex dynamics of graph signals due to its relationship with the spectrum~\cite{stankovic2019introduction}. To address this limitation, we propose in this letter a novel method, based on classical $\DE$ for time series, which effectively captures the irregularity of graph signals, providing critical insights into the underlying graph structure and data.

To evaluate our method's performance, we employed synthetic and real-world graphs, including random geometric graphs (used to model wireless sensor networks~\cite{kenniche2010random}) and small-world networks (observed widely in biological systems~\cite{watts98}, social networks~\cite{newman2001random}, and complex systems~\cite{newman2000models}). In our analysis, we generalised the mix process $\MIX(p)$, a stochastic process combining a sinusoidal signal with random dynamics controlled by the parameter $p\in[0,1]$. This process has been employed to assess the performance of various entropy metrics in time series~\cite{Pincus1994, Azami2018} and images~\cite{Silva2016}. Moreover, we analyse centrality measures, which assign ranking values to the graph's vertices based on their position or importance within the graph. Centrality measures play a crucial role in social network analysis for evaluating the importance of vertices in communication~\cite{borgatti2006graph,das2018study}. 

\emph{Contribution.}  In this letter, we propose a method for defining Dispersion Entropy for Graph Signals, denoted as $\DEG$.  Our approach generalises the classical univariate definition of $\DE$ by incorporating topological information through the adjacency matrix. We demonstrate the effectiveness of $\DEG$ on synthetic and real-world datasets, and characterise the relationship between graph topology and signal dynamics. Our results indicate that $\DEG$ is a promising technique for analysing graph data, holding potential for numerous applications in fields such as biomedicine and social sciences.

\emph{Notation.} 
A \emph{simple undirected graph} $G$ is defined as a triple $G = (\V,\E,\A)$, where $\V$ is a finite set of vertices (without isolated vertices), $\E$ is the set of edges, and $\A$ is the corresponding adjacency matrix. A \emph{graph signal} is a real function defined on the vertices $\map{\X}{\V}{\R}$, represented as an $N$-dimensional column vector, $\X=\left[x_1, x_2, \dots, x_N \right]^T \in \R^{N\times 1}$, with the same indexing as the vertices. The combinatorial Laplacian and normalised Laplacian are denoted by $\Delta$ and \textbf{$L$}, respectively.

A \emph{$d$-dimensional Random Geometric Graph} ($\RGG$) is a graph in which each vertex $v_i \in \mathcal{V}$ is assigned a random $d$-dimensional coordinate $v_i \rightarrow \mathbf{x}_i=(x_i^1,\dots,x_i^d) \in [0,1]^d$. Two vertices $v_i,v_j \in \mathcal{V}$ are connected by an edge if the distance between their assigned coordinates is below a predefined threshold $r>\textbf{d}(v_i,v_j)$ (see~\cite{dall2002random}).

\textbf{Dispersion Entropy for Graph Signals ($\DEG$).} Let $\X$ be a graph signal defined on $\G$, $2\leq m\in\Na$ be the \emph{embedding dimension}, $L\in\Na$ be the \emph{delay time} and $c\in\Na$ be the \emph{class number}. The $\DEG$ is defined as follows: 
\begin{enumerate}[wide, labelwidth=!, labelindent=0pt]
	\item The \emph{embedding matrix} $\textbf{Y}\in\R^{N\times m}$ is given by
		$\textbf{Y}=[\textbf{y}_0,  \textbf{y}_1,  \cdots,  \textbf{y}_{m-1}]$, defined by
	\[\textbf{y}_k=D\A^{kL} \textbf{X}\in \R^{N\times 1}\;, \quad k=0,1,\dots,m-1\;,\]
	 where $D$ is the diagonal matrix $D_{ii}=1/\sum_{j=1}^N (\A^{kL})_{ij}$. 
	
	\item \emph{Map function.} Each entry of the embedding matrix $\Y$ is mapped to an integer number from $1$ to $c$, called a class. The function $\map{F}{\R}{\N_c}$ where $\N_c=\{1,2,\dots,c\}$ is applied element-wise on the matrix $\Y$, i.e. $F(\Y)\in \N_c^{N\times m}$ where $F(\Y)_{ij}=F(y_{ij})$.

	\item \emph{Dispersion patterns.} Each row of the matrix $F(\textbf{Y})$, called an \emph{embedding vector}, is mapped to a unique dispersion pattern.
	Formally, the \emph{embedding vectors} consist of $m$ integer numbers (ranged from $1$ to $c$) corresponding to each row of the matrix $F(\Y)$, i.e., $\row_i(F(\Y))=\left( F(y_{ij})\right)_{j=1}^m $ for $i=1,2,\dots,N$. The set of dispersion patterns is $\Pi=\set{\pi_{v_{1} v_{2} \dots v_{m}}}{v_i\in  \N_c}$.
	Each embedding vector is uniquely mapped to a dispersion pattern, i.e.,  $\row_i(F(\Y))\rightarrow \pi_{v_{1} v_{2} \dots v_{m}}$ where $v_1=F(y_{i1}), v_2=F(y_{i2}),\dots, v_m=F(y_{im})$.
	\item \emph{Relative frequencies.} For each dispersion pattern $\pi\in\Pi$, its relative frequency is obtained as:
	\begin{equation*}
		p\left(\pi\right)=\frac{\card{\left\{i \mid i \in \V,\row_i(F(\Y)) \text { has type } \pi\right\}}}{N}\;.
	\end{equation*}
	\item The \emph{Dispersion Entropy for Graph Signals} $\DEG$ is computed as the normalised Shannon's entropy for the distinct dispersion patterns as follows:
	\begin{equation*}
		\DEG(\X,m,L,c)=-\dfrac{1}{\log(c^m)}\sum_{\pi \in \Pi } p(\pi) \ln p(\pi)\;.
	\end{equation*}		
\end{enumerate}

The $\DEG$ algorithm offers several unique features and properties. The \emph{embedding matrix} is a key component that captures the topological relationships between the graph and signal. With a chosen embedding dimension $3\leq m \leq7$, and delay time commonly set to $L=1$ (values suggested~\cite{Bandt2002}), the embedding matrix $\textbf{Y}\in\R^{N\times m}$ is constructed. 
Each column vector $\textbf{y}_k$ is calculated by averaging the signal values of neighbouring vertices, i.e. $\textbf{y}_k=D\A^{kL}\X$, where the power of the adjacency matrix $\A^{kL}$ denotes the number of $kL$-walks between two vertices. Additionally, the diagonal matrix $D$ serves as a normalisation factor. The first column of the matrix $\textbf{Y}$ corresponds to the original graph signal, i.e., $\y_0=\X$, and the second column is related to the normalised Laplacian $L$, specifically, $\y_1=\X-L\X$.
 
\emph{Map functions.} To address limitations in assigning the signal $\X$ to only a limited number of classes, various maps functions $\map{F}{\R}{\N_c}$ have been proposed~\cite{Rostaghi2016}. The non-linear cumulative distribution function ($\NCDF$) is commonly utilised. The map $\map{G}{(0,1)}{\N_c}$ is defined as $G(x)=round(cx+0.5)$, where rounding increases or decreases a number to the nearest digit. The map $\map{\NCDF}{\R}{(0,1)}$ is defined as:
\[\NCDF(x)=\frac{1}{\sigma \sqrt{2 \pi}} \int_{-\infty}^{x} e^{\frac{-(t-\mu)^2}{2 \sigma^2}} \mathrm{d} t\]
where $\mu$ and $\sigma$ represent the mean and standard deviation of $\X$, respectively. Thus, $\map{F=G\circ \NCDF}{\R}{\N_c}$ is the map function used in our implementation of the $\DEG$ algorithm.

\emph{Dispersion patterns.} The number of possible dispersion patterns that can be assigned to each embedding vector is $c^m$. Moreover, the number of embedding vectors constructed in the $\DEG$ algorithm is $N$, one for each vertex. In contrast, classical $\DE$ has a number of embedding vectors dependent on the parameters $m$ and $L$, specifically, $n-(m-1)L$.

\emph{Shannon’s entropy} provides a measure of irregularity that can be used to compare signals defined on different graphs. The value of Shannon's entropy ranges from $0$ (regular behaviour) to $1$ (irregular behaviour).

\textbf{Dispersion entropy for directed graphs.}
The algorithm $\DEG$ provides a tool for analysing undirected graph signals, and can be extended to \emph{directed graphs} with minor modifications. Additionally, the algorithm can be applied to any graph signal, but for time series, it produces the same values as the classical $\DE$~\cite{Rostaghi2016}. This is established in Proposition~\ref{prp:equal}.

\begin{proposition}[\emph{Equivalence of $\DE$ and $\DEG$ for time series}]\label{prp:equal} 
	Let $\textbf{X}=\left\{x_i\right\}_{i=1}^{N}$ be a time series and  $\overrightarrow{G}=\overrightarrow{P}$ is the directed path on $N$ vertices. Then, for all $m, c$ and $L$ the following equality holds:
	\[\DE(m,L,c)=\DE_{\overrightarrow{P}}(m,L,c)\;.\]
\end{proposition}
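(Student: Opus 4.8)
The plan is to show that the two algorithms induce identical relative-frequency distributions over dispersion patterns; equality of the normalised Shannon entropies is then immediate, since both carry the same normalising factor $\log(c^m)$. The core observation is that for the directed path each power of the adjacency matrix acts as a shift, so the columns of the embedding matrix are just delayed copies of the series.

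First I would compute the powers of the adjacency matrix of $\overrightarrow{P}$. Labelling the vertices $1\to 2\to\cdots\to N$, we have $\A_{ij}=1$ iff $j=i+1$, and a one-line induction gives $(\A^{n})_{ij}=1$ iff $j=i+n$ (and $0$ otherwise). In particular $(\A^{kL})_{ij}=\delta_{j,\,i+kL}$ for $i+kL\le N$. Next I would evaluate the embedding columns $\y_k=D\A^{kL}\X$: the row sum $\sum_{j}(\A^{kL})_{ij}$ equals $1$ when $i+kL\le N$ and $0$ otherwise, so the (row- and $k$-dependent) normaliser satisfies $D_{ii}=1$ on every row reaching a vertex of the path, whence $(\y_k)_i=x_{i+kL}$. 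Consequently, for every vertex $i$ with $i+(m-1)L\le N$,
\[
\row_i(\Y)=\bigl(x_i,\,x_{i+L},\,\dots,\,x_{i+(m-1)L}\bigr),
\]
which is exactly the $i$-th embedding vector of classical $\DE$.

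Since the map $F=G\circ\NCDF$ is built from the mean and standard deviation of the \emph{same} series $\X$ in both settings, $F$ is identical, so $F(\row_i(\Y))$ coincides with the classical mapped embedding vector for each admissible $i$, and the induced dispersion patterns match one-to-one. It then remains to reconcile the $N$ in the denominator of $p(\pi)$ with the classical count $N-(m-1)L$. I would argue that the vertices $i>N-(m-1)L$ cannot reach $i+(m-1)L$ on the path, so the relevant row sum of $\A^{(m-1)L}$ vanishes and $\y_{m-1}$ is undefined on those rows; such boundary vertices carry no valid dispersion pattern and are excluded, leaving exactly $N-(m-1)L$ admissible rows. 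With an identical pattern multiset over an identical number of vectors, the frequencies $p(\pi)$ agree pattern by pattern, and equality of the entropies follows.

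The main obstacle is precisely this boundary bookkeeping: one must fix the convention that rows with a vanishing row sum (equivalently, vertices lacking an outgoing $kL$-walk) are discarded, so that the effective normalisation is $N-(m-1)L$ rather than the nominal $N$. Once that convention is made explicit, every remaining step is a direct verification.
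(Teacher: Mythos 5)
Your proposal is correct and follows essentially the same route as the paper: powers of $\A$ act as shifts so that the embedding rows become the classical delay vectors $\bigl(x_i, x_{i+L},\dots,x_{i+(m-1)L}\bigr)$, the map $F$ is unchanged, and the boundary vertices are discarded to match the classical count $N-(m-1)L$. The convention you had to posit explicitly --- excluding rows whose row sums of $\A^{kL}$ vanish --- is not an extra assumption but precisely the paper's formal definition of the directed-graph entropy $\DEGD$, which restricts the embedding matrix to the vertex set $V^*=\set{i\in\V}{\sum_{j}(\A^{kL})_{ij}\neq 0 \text{ for all } k=0,\dots,m-1}$, so your boundary bookkeeping coincides with the definition the proposition is stated in.
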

\begin{proof}
	Please refer to the supplemental material~\cite{supplemental}.
\end{proof}

\textbf{MIX Processing on $\RGGs$.} 
We introduce a graph-based stochastic process $\MIX_G(p)$ defined on $\RGGs$ to assess the performance of $\DEG$ in capturing complex signal dynamics. Here, $G$ is a $d$-dimensional $\RGG$ with $N$ vertices, and the graph signal $\MIX_G(p)$ is defined by:
\begin{equation}\label{eq:mix}
	\MIX_G(p)_i = (1-R_i)S_i + R_iW_i\quad \text{for} \quad 1\leq i \leq N\;,
\end{equation}
where $R_i$ is a random variable with a probability $p$ of taking the value $1$ and a probability $1-p$ of taking the value $0$, $W_i$ is uniformly distributed white noise sampled from the interval $[-\sqrt{3}, \sqrt{3}]$, and $S_i=\sum_{j=1}^{d} \sin(f x_i^j)$ represents a sinusoidal signal with frequency $f$. 

The construction of a $d$-dimensional $\RGG$ requires selecting two parameters, $r$ and $N$, while the graph signal generated by the $\MIX_G(p)$ process incorporates random noise (determined by $p$) into some values of the sinusoidal signal (determined by $f$). Our algorithm, $\DEG$, detects changes in the frequency of the signal (increasing $f$), the presence of white noise (increasing $p$), and the graph connectivity (increasing $r$) by increasing the entropy values of $\DEG$. Fig.~\ref{fig:RGG} illustrates the effectiveness of $\DEG$ in detecting the dynamics of the $\MIX_G$ process.

\begin{figure}

		\centering
	\includegraphics[scale=.22345]{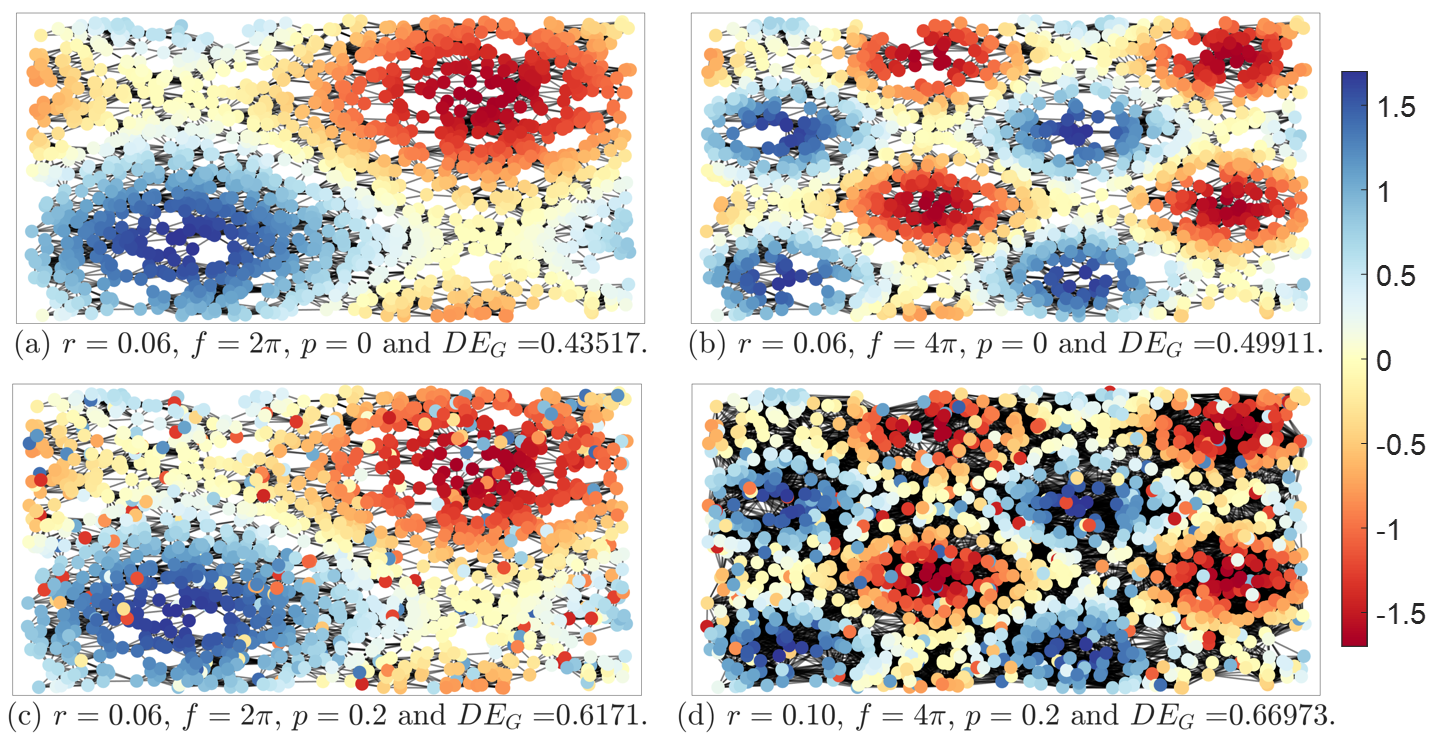} 	
		\caption{Examples of $\RGGs$ with $N=1,500$ and values $r=0.06$ and $r=0.10$. The graph signals are generated by the $\MIX_G$ process with different parameter values.}
		\label{fig:RGG} 
	\end{figure}

\emph{Fixing the graph, changing the signal.} We analyse the impact of different parameter values on the irregularity of the graph signal $\MIX_G(p)$ by fixing the underlying $\RGG$ with constant $N=1500$ and $r=0.06$. We employ a fixed embedding dimension of $m=3$, the number of classes set at $c=3$, time delay $L=1$, and $\NCDF$ as the non-linear map (similar results are obtained for others non-linear mappings and values of $m$, $c$, and $L$). 

Increasing the frequency parameter $f$ of the $\MIX_G(p)$ process results in a more irregular graph signal.  The frequency $f=2\pi$ and $f=4\pi$ of the sine function in Eq.~\ref{eq:mix} are depicted in Fig.\ref{fig:RGG}a)-b). This increase in frequency produces more variation in the graph signal values between neighbouring vertices. Our algorithm $\DEG$ detects these dynamics by increasing the entropy values. Similarly, an increase in the randomness parameter $p$ results in a more random signal. The parameters $p=0$ and $p=0.2$ in Eq.~\ref{eq:mix} are depicted in Fig.\ref{fig:RGG}a), c). The $\DEG$ algorithm detects the change in randomness, by increasing the entropy values.

More generally, we compute the entropy values for a range of frequencies from $3/2\pi$ to $16\pi$, as well as for different levels of noise, with probabilities ranging from $0$ to $1$. The results of 30 realizations are depicted in Fig.~\ref{fig:freq_p1}, showing the mean and standard deviation. The $\DEG$ algorithm effectively detects the increasing irregularity of the signal by increasing the entropy values. Moreover, the algorithm can distinguish between different levels of irregularity in the $\MIX_G(p)$ signal based on the chosen value of $p$. 
\begin{figure}
	\centering
	\begin{subfigure}[b]{0.49\linewidth}
	\includegraphics[scale=.3]{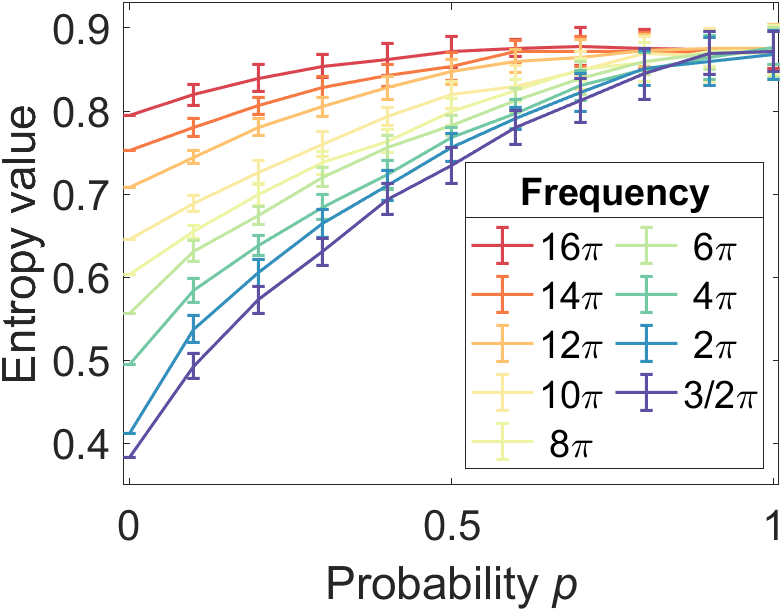} 
	\caption{\label{fig:freq_p1}}
	\end{subfigure}
	\begin{subfigure}[b]{0.49\linewidth}
	\includegraphics[scale=.3]{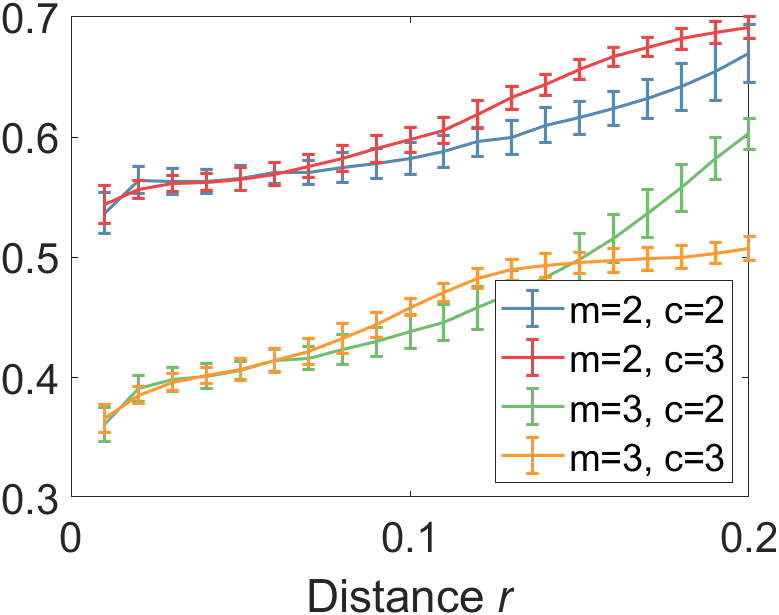}
		\caption{\label{fig:freq_p2}} 
	\end{subfigure}
	\caption{\label{fig:freq_p} Entropy values (a) for a fixed graph, increasing the noise and for several frequencies and (b) the underlying graph is more connected.}
\end{figure}

\emph{Fixing the signal, changing the graph}. By fixing the graph signal, we investigate the behaviour of the $\DEG$ algorithm as the underlying graph changes. Specifically, we examine the impact of increasing the distance parameter $r$ from $0.01$ to $0.3$ used for construct the $\RGG$ with $N=1,500$ vertices. Entropy values are computed for $20$ realisations, and the mean and standard deviation are depicted in Fig.~\ref{fig:freq_p2} for several values of $m$ and $c$. As $r$ increases, the number of edges increases, connecting more distant vertices with different values. The resulting patterns are more irregular, with more changes and a wider distribution, leading to an increase in the entropy value.

\textbf{The spectrum of the Laplacian and $\DE_G$.} Let $\X$ be a graph signal, \emph{the smoothness of $\X$} is given by $\X^T \Delta \X$~\cite{ortega18}. We examine the relationship between $\DEG$ and the spectrum of $\Delta$ acting on an $\RGGs$ (similar results are obtained for other random graphs).  

Let $G$ be a $\RGG$ with $N=1,500$ vertices. The eigenvalues of $\Delta$ and its corresponding eigenvectors are denoted by $\sigma=\{\lambda _{1}\leq \lambda _{2}\leq \cdots \leq \lambda _{N}\}$ and $\{f_i\}_{i=1}^N$, respectively. The smoothness of each eigenvector is evaluated and normalised based on the classical definition, i.e., $\lambda _{N}^{-1} f_i^T \Delta f_i$, and the results are shown in Fig.~\ref{fig:DEGeigenvalues}. Each eigenvector $f_i$ is considered as a graph signal and $\DEG$ is computed for $c=2,3,4$ and $m=2$. The results are depicted in Fig.~\ref{fig:DEGeigenvalues}. The smoothness definition is an increasing function, i.e., smaller eigenvalues correspond to smoother eigenvectors (also known as graph Fourier modes~\cite{girault2018irregularity}). Such information is limited especially when eigenfunctions associated with equal eigenvalues (and equal smoothness) exhibit different levels of irregularity. By applying the $\DEG$ algorithm, we can better understand and analyse the dynamics of these eigenfunctions. 
\begin{figure}
	\centering
	\includegraphics[scale=.2]{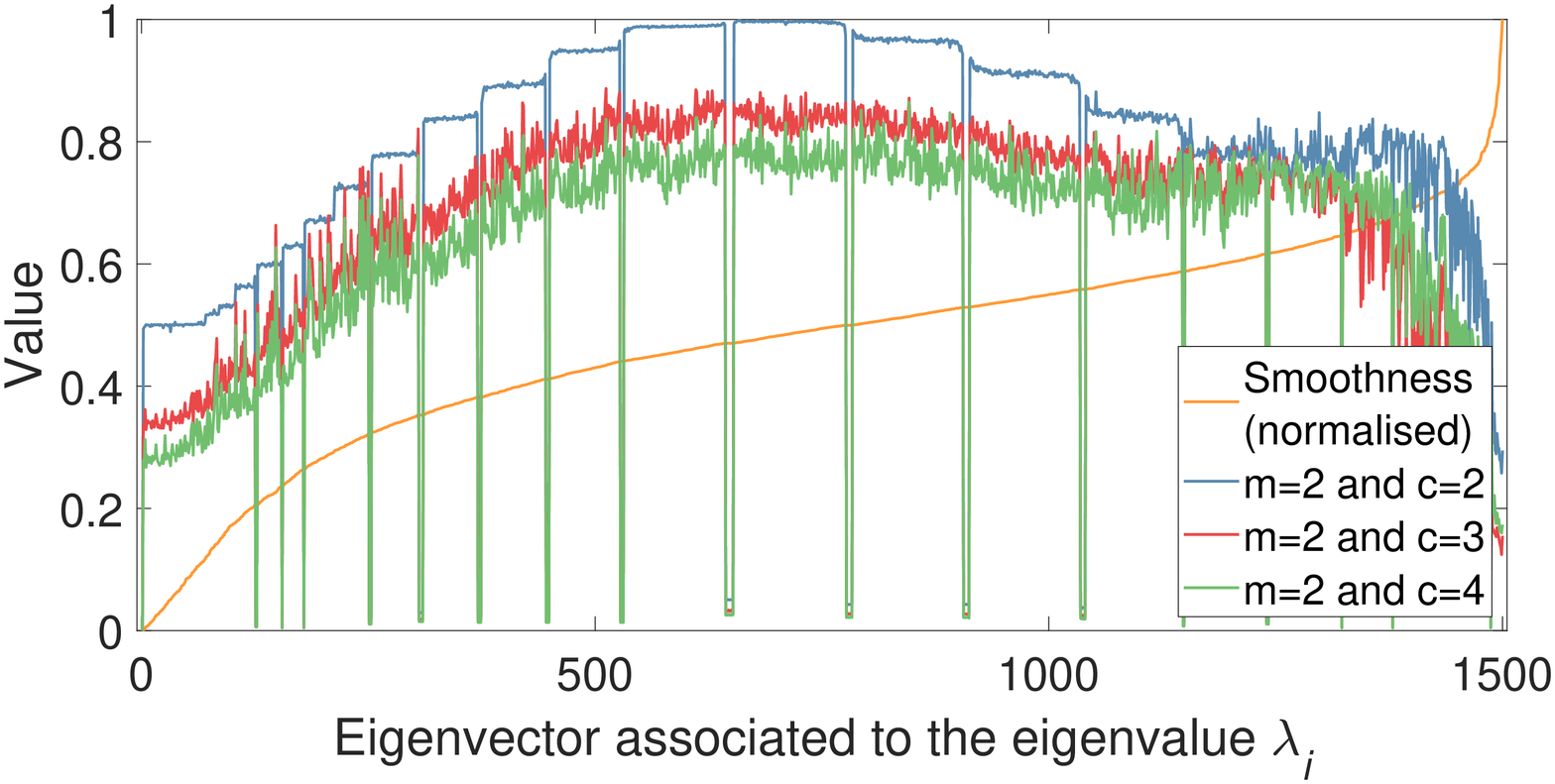} 
	\caption{Entropy values of $\DEG$ and smoothness based on the Laplacian $\Delta$ for the eigenvalues as graph signals.\label{fig:DEGeigenvalues} }
\end{figure}

The dispersion entropy computed for different values of $m$ and $c$ enables us to capture abrupt changes in entropy values when the dynamics of eigenfunctions change. Fig.~\ref{fig:several} depicts six eigenvectors $\{f_j\}_{j=527}^{532}$ corresponding to the eigenvalues $\{\lambda_j\}_{j=527}^{532}$. The definition of smoothness of $f_j$ coincides with the value $\lambda$, and the eigenvalue $\lambda_{528}=15$ has a multiplicity equal to four, and its eigenfunctions $\{\lambda_j\}_{j=528}^{531}$ exhibit a regular behaviour, while $f_{527}$ and $f_{532}$ are more irregular.  Hence, classical definitions are not able to fully capture the difference in dynamics within the graph signals. In contrast, the $\DEG$ algorithm is capable of detecting them. In particular, the entropy value of the eigenfunctions is nearly close to $0$ if the signal exhibits a more regular dynamic and close to $1$ for the most irregular eigenfunctions. Thus, $\DEG$ detects eigenvalues with high multiplicity, useful for the construction of isospectral graphs~\cite{fabila2022geometric}.

\begin{figure}
	\centering
	\includegraphics[scale=.2]{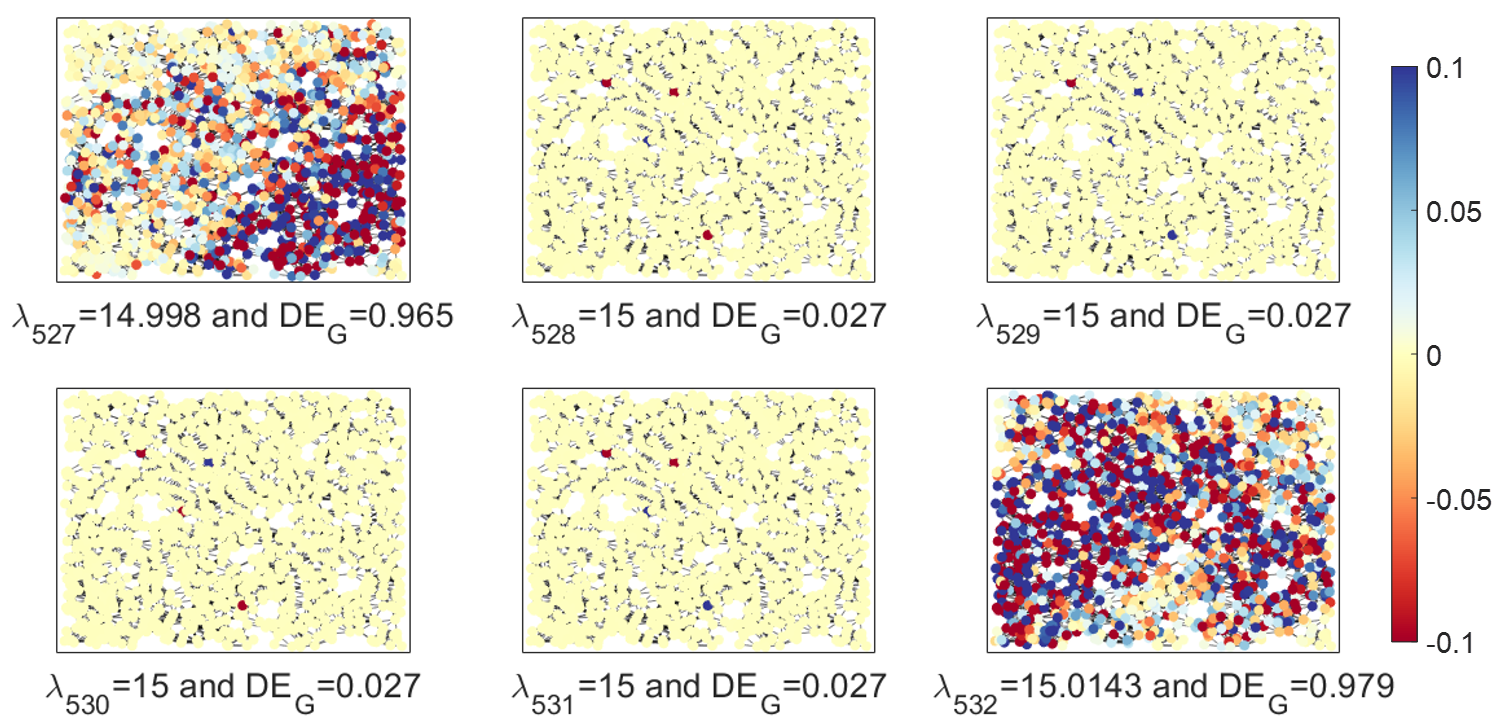} 
	\caption{Several eigenfunctions and their entropy values.\label{fig:several} }
\end{figure}

\textbf{Small-world networks and $\DEG$.} We evaluate the performance of $\DEG$ in detecting dynamics on signals defined on small-world networks, generated by the Watts-Strogatz model~\cite{newman2001random}, and changing the mean degree $k$ and rewiring probability $p$. Let $\G$ be a small-world network with $N=1,500$ and various graph signals, including a random signal, a recurrence relation (logistic map~\cite{Bandt2002}), a stochastic process (Wiener process~\cite{wiener1938homogeneous}), and a periodic signal (sine). 

\emph{Fixing $k$, changing $p$.} By fixing $k=1$, we analyse the effect of the parameter $p$ (ranging from $0$ to $1$) in the construction of the network $G_p$ and the entropy values. We compute $\DEG$ for each graph signal for $20$ realizations, and the mean and standard deviation are depicted in Fig.~\ref{fig:timeseries_p1}. 
For $p=0$, the underlying graph $G_p$ is a cycle of $N$ vertices. A path graph is a geometric perturbation of a cycle~\cite{fabila2022spectral,carrasco2020discrete}
and due to Prop.~1, we can consider the values of $p=0$ to be the classical $\DE$. The classical $\DE$ is able to detect the dynamics of various signals, but its computation does not involve the topological structure, thus it only works for the path graph. In contrast, $\DEG$ takes into account not only the signal information but also the graph structure. In this setting, the dynamic of the random signal is almost constant, because it is not affected by $G_p$. The Wiener process and sine signals exhibit lower entropy values for $p=0$ (e.g., the cycle), as their dynamics stem from either periodicity (sine) or stochastic processes (Wiener). However, as $p$ increases, the underlying graph becomes more random, and hence the entropy value also increases. In any case, $\DEG$ is still able to distinguish the random signal from the periodic signal and the Wiener process (for all $p<0.8$). Two logistic map signals are generated, one with oscillatory behaviour ($r=3.3$) and one with chaotic behaviour ($r=3.7$). These characteristics are is well detected by $\DEG$ for all values of $p$.
\begin{figure}
	\centering
		\centering
	\begin{subfigure}[b]{0.49\linewidth}
		\includegraphics[scale=.3]{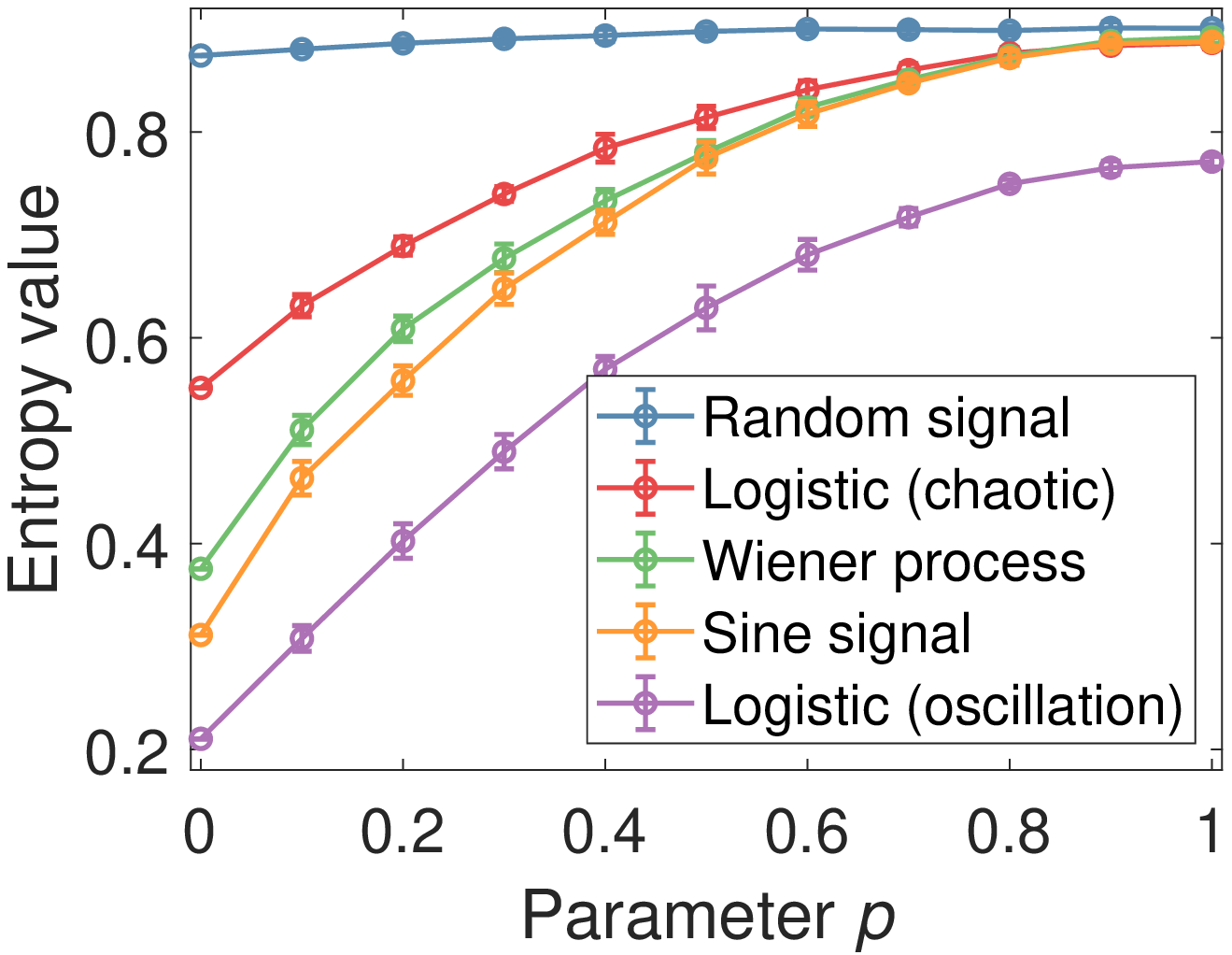} 
		\caption{\label{fig:timeseries_p1} }
	\end{subfigure}
	\begin{subfigure}[b]{0.49\linewidth}
		\includegraphics[scale=.3]{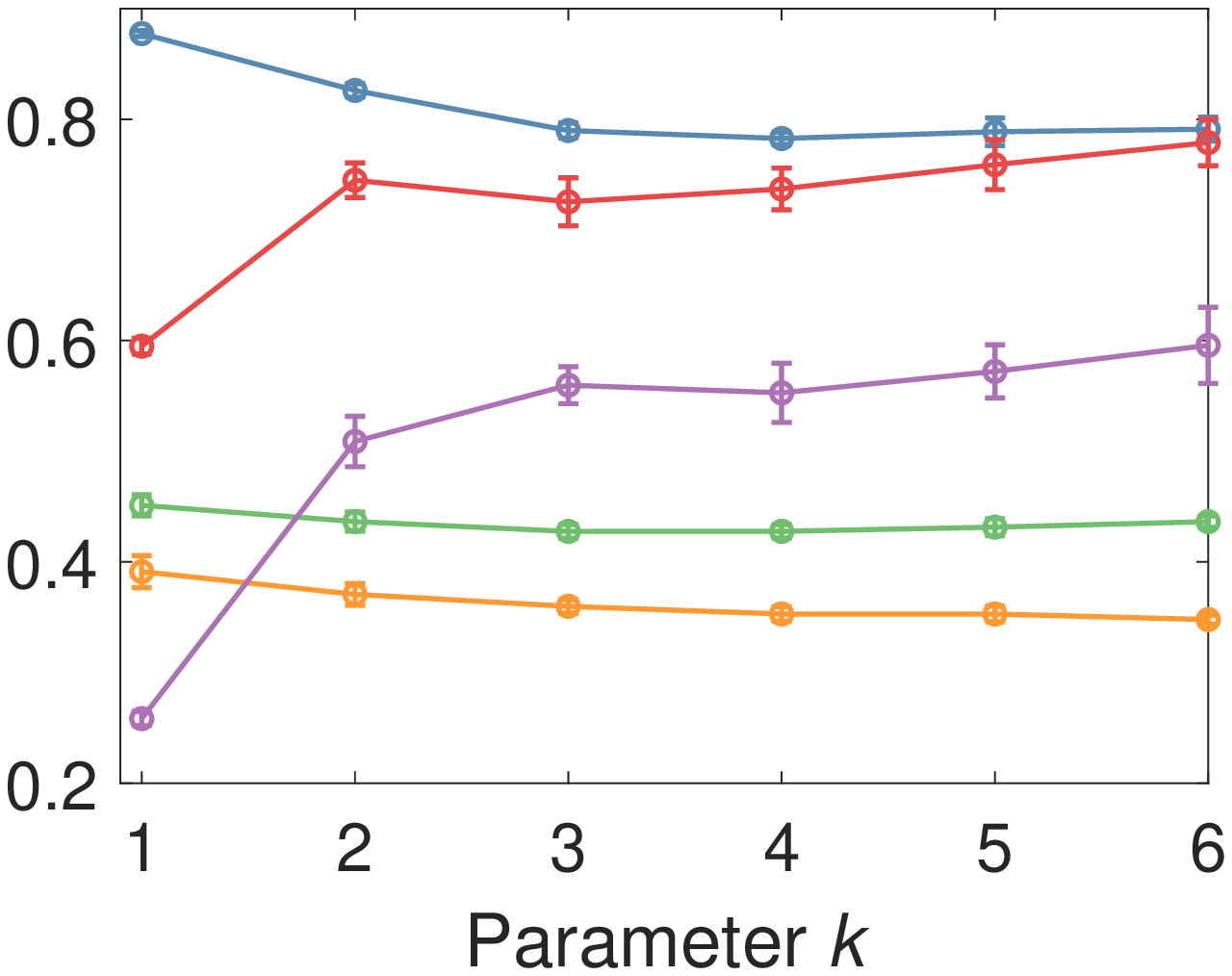}
		\caption{\label{fig:timeseries_p2} } 
 	\end{subfigure}
	\caption{\label{fig:timeseries} Entropy values for different signals defined on a small-world network generated by the Watts-Strogatz model.}
\end{figure}

\emph{Fixing $p$, changing $k$.}  
By fixing $p=0.05$, the underlying graph $G_k$ where $1\leq k\leq 6$ increases the connectivity. In Fig.~\ref{fig:timeseries_p2}, we present the entropy values for each graph signal. The entropy values for the sine and Wiener signals almost remain constant, independent of $G_k$, due to their periodicity and stochastic dynamics. However, the logistic map exhibits a higher degree of variability in its entropy values as $k$ increases. This is because the logistic map is defined by a recurrence formula, where each value depends only on the previous sample, and if $k$ increases, the underlying $G_k$ has more connections between neighbourhoods, which may disrupt the recurrence relation, generating more irregular signals and resulting in higher entropy values. Conversely, the random signal shows a reduction in entropy values as $k$ increases, as the creation of more connections leads to a more robust average value due to the law of large numbers.

\textbf{Graph Centrality Measures and $\DEG$.} Each centrality measure can be considered as a graph signal, allowing the application of the $\DEG$ algorithm to assess the irregularity of centrality measures on real and synthetic graphs (refer to Table~I in the supplemental material~\cite{supplemental}).

We used six centrality measures as graph signals,  namely~\cite{borgatti2006graph,das2018study}: \emph{Eigenvector centrality}, \emph{Betweenness}, \emph{Closeness}, \emph{Harmonic centrality}, \emph{Degree} and \emph{Pagerank}. The $\DEG$ algorithm leverages the graph topology to effectively detect irregularities generated by each centrality measure, as demonstrated in Fig.~\ref{fig:central2}. In particular, the \emph{Eigenvector Centrality} produces smooth signals~\cite{dong2016learning} in most graphs, and this is reflected in low entropy values. Well-connected vertices tend to appear on the shortest paths between other vertices. When the graph has only a few such vertices, the entropy of the \emph{Betweenness} measure is lower.  In cases where the graph has a more irregular distribution of vertices with this characteristic (e.g., in the sphere due to its symmetry), the entropy values are higher. A similar effect occurs when considering the average length of the shortest path between the vertex and all other vertices, as detected by the \emph{Closeness} measure. Finally, the \emph{Degree} and \emph{PageRank} measures produce more irregular graph signals because each signal's value defined on the graph depends only on local properties (the degree or the number and importance of the other vertices connected to it) rather than global properties (such as average paths between vertices in the previous measures). 

\begin{figure}[H]
	\centering
	\includegraphics[scale=.2]{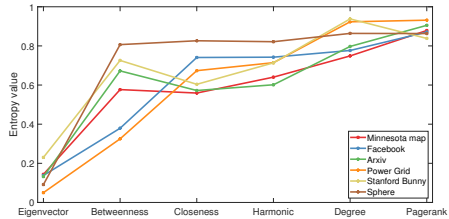}  
	\caption{\label{fig:central2} The dispersion entropy for various centrality measures.}
\end{figure}

\textbf{Comparing $\DEG$ and $\PEG$ Performance} The Permutation Entropy for Graph Signals, denoted by $\PEG$~\cite{Fabila-Carrasco2022}, marked the first entropy metric specifically designed for graph-based data analysis. Both methods rely on the adjacency matrix, but $\PEG$ primarily focuses on the order of amplitude values (local properties), which might result in the loss of valuable information regarding the amplitudes (global properties). $\DEG$ addresses these limitations by providing a more comprehensive way to characterize the dynamics of graph signals. We conducted the same previous analysis with $\PEG$ (supplemental material~\cite{supplemental}), and found that $\DEG$ consistently outperforms $\PEG$ in all cases, highlighting the potential of our novel method for effectively analysing graph signal irregularities.

\textbf{Conclusions.} We have introduced Dispersion Entropy for Graph Signals ($\DEG$), a method that enhances the analysis of irregularities in graph signals. Our approach generalises classical dispersion entropy, enabling its application to a wide array of domains, including real-world graphs, directed and weighted graphs, and unveiling novel relationships between graph signals and graph-theoretic concepts (e.g., eigenvalues and centrality measures). By overcoming the limitations of the classical smoothness definition, $\DEG$ offers a more comprehensive approach to analysing graph signals and holds significant potential for further research and practical applications, as it effectively captures the complex dynamics of signals across diverse topology configurations.

\textbf{ Acknowledgement} This work was supported by the Leverhulme Trust via a Research Project Grant (RPG-2020-158).
\newpage

\newcommand{\beginsupplement}{%
	\setcounter{table}{0}
	\renewcommand{\thetable}{S\arabic{table}}%
	\setcounter{figure}{0}
	\renewcommand{\thefigure}{S\arabic{figure}}%
	\setcounter{section}{0}
	\renewcommand{\thesection}{\Roman{section}}%
	\setcounter{equation}{0}
	\renewcommand{\theequation}{S\arabic{equation}}%
	\setcounter{proposition}{0}
	\renewcommand{\theequation}{S\arabic{proposition}}%
}
\clearpage
\beginsupplement
\setcounter{secnumdepth}{2}
\begin{center}
	\Large {\bf Supplemental Materials for ``Dispersion entropy: A Measure of Irregularity for Graph Signals''}
\end{center}

\section{Dispersion Entropy for Directed Graphs.} 
In the letter, we have introduced the Dispersion Entropy for graph signals, denoted as $\DEG$, in the context of \emph{undirected} graphs. To extend this concept to \emph{directed graphs} or \emph{digraphs}, the approach remains analogous, with the primary distinction being the need to incorporate specific constraints on the rows of the embedding matrix. These constraints are imposed by the well-defined vectors $\y_k$.

Let $\DG = (\V,\E,\A)$ be a digraph with $N$ vertices, where $\A$ denotes the adjacency matrix of the directed graph, and $\textbf{X}=\left\{x_i\right\}_{i=1}^{n}$ is a signal defined on $\DG$. Given an \emph{embedding dimension} $m$ with $2\leq m\in\Na$, a \emph{delay time} $L\in\Na$, and a \emph{class number} $c\in\Na$, the Dispersion Entropy for Directed Graphs ($\DEGD$) is defined as follows: 
\begin{enumerate}[wide, labelwidth=!, labelindent=0pt]
	\item \emph{Embedding matrix.} Let $\V^*\subset \V$ be the set given by:
	\begin{equation*}\label{eq:directedset}
		V^*=\set{i\in \V }{ \sum_{j=1}^n (\A^{kL})_{ij}\neq 0 \text{ for all } k=0,1,\dots,m-1}\:.
	\end{equation*}	
	The \emph{embedding matrix} $\textbf{Y}^*\in\R^{\card{V^*}\times m}$ is given by:

	\begin{align}
		\textbf{Y}^*=[\textbf{y}^*_0,  \textbf{y}^*_1,  \cdots,  \textbf{y}^*_{m-1}]
	\end{align}
	where $\textbf{y}^*_k\in \R^{\card{V^*}\times 1}$, given by the restriction of $\textbf{y}_k$ to the vertices in $V^*$, i.e.,  $\textbf{y}^*_k=\left.\y^k\right|_{V^*}$. 
	\item \emph{Map function.} Each element of the embedding matrix $\Y^*$ is mapped to an integer number from $1$ to $c$, called a class, i.e., we define a function $\map{F}{\R}{\N_c}$ where $\N_c=\{1,2,\dots,c\}$ that applies element-wise on the matrix $\Y^*$, i.e. $F(\Y^*)\in \N_c^{N\times m}$ where $F(\Y^*)_{ij}=F(y^*_{ij})$.

	\item \emph{Dispersion patterns.} Each row of the matrix $F(\textbf{$\Y^*$})$, called an \emph{embedding vector}, is mapped to a unique dispersion pattern.
	Formally, the \emph{embedding vectors} consist of $m$ integer numbers (from $1$ to $c$) corresponding to each row of the matrix $F(\Y^*)$, i.e., $\row_i(F(\Y^*))=\left( F(y^*_{ij})\right)_{j=1}^m $ for $i=1,2,\dots,N$. The set of dispersion patterns is defined as $\Pi=\set{\pi_{v^*_{1} v^*_{2} \dots v^*_{m}}}{v^*_i\in  \N_c}$.
	Each embedding vector is uniquely mapped to a dispersion pattern, i.e.,  $\row_i(F(\Y^*))\rightarrow \pi_{v^*_{1} v^*_{2} \dots v^*_{m}}$ where $v_1=F(y^*_{i1}), v_2=F(y^*_{i2}),\dots, v_m=F(y^*_{im})$.
	
	\item \emph{Relative frequencies.} For each dispersion pattern $\pi\in\Pi$, its relative frequency is obtained as:
	\begin{equation*}
		p\left(\pi\right)=\frac{\card{\left\{i \mid i \in \V,\row_i(F(\Y)) \text { has type } \pi\right\}}}{\card{\V^*}}\;.
	\end{equation*}
	\item \emph{Shannon's entropy.} The \emph{dispersion entropy for graph signals} $\DEGD$ is computed as the normalised Shannon's entropy for the distinct dispersion patterns as follows:
	\begin{equation*}
		\DEGD(\X,m,L,c)=-\dfrac{1}{\log(c^m)}\sum_{\pi \in \Pi } p(\pi) \ln p(\pi)\;.
	\end{equation*}		
\end{enumerate}

\textbf{Properties} The $\DEGD$ algorithm for directed graphs exhibits the following properties:

The directed graph version of $\DEGD$ serves as a generalization of its undirected counterpart. If $G$ is an undirected connected (non-trivial) graph, then $\V^*=\V$, and all the steps remain the same in both the directed and undirected versions of the algorithm. 

The restriction process $\textbf{y}^*_k=\left.\y^k\right|_{V^*}$ is equivalent to the vertex virtualisation process presented in~\cite{fabila2018spectral}.

Similarly, the $\DEGD$ algorithm can be extended to weighted (directed or undirected) graphs by restricting the subset to		\begin{equation*}\label{eq:directedset}
	V^*=\set{i\in \V }{ \sum_{j=1}^n (\textbf{W}^{kL})_{ij}\neq 0 \text{ for all } k=0,1,\dots,m-1}\:.
\end{equation*}	
where $\textbf{W}$ represents the weighted adjacency matrix. This generalisation allows for a more comprehensive analysis of graph signals in various contexts. 
\section{Proof of Proposition 1.} 
The classical dispersion entropy for time series was established in the literature by~\cite{Rostaghi2016}. In the following proposition, we demonstrate that when the $\DEG$ is restricted to time series (considering the directed path as the underlying graph), the $\DEG$ is equivalent to the classical $\DE$.

A \emph{directed path} on $k$ vertices is a directed graph that connects a sequence of distinct vertices with all edges oriented in the same direction, denoted as $\overrightarrow{P}$. Its vertices are given by $\V={1,2,\dots,k}$ and its arcs are $(i,i+1)$ for all $1\leq i \leq k-1$.
\begin{proposition}[\emph{Equivalence of $\DE$ and $\DEG$ for time series}]\label{prp:equal} 
	Let $\textbf{X}=\left\{x_i\right\}_{i=1}^{N}$ be a time series and consider $\overrightarrow{G}=\overrightarrow{P}$ the directed path on $n$ vertices, then for all $m, c$ and $L$, the following equality holds:
	\[\DE(m,L,c)=\DE_{\overrightarrow{P}}(m,L,c)\;.\]
\end{proposition}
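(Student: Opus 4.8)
The plan is to show that, for the directed path $\overrightarrow{P}$, the $\DEG$ embedding matrix reproduces exactly the classical delay-embedding vectors, so that every downstream step (class assignment, dispersion patterns, relative frequencies, normalised Shannon entropy) matches term by term. The engine of the argument is a direct computation of the powers $\A^{kL}$ of the path's adjacency matrix: since the only arcs are $(i,i+1)$, the number of directed walks of length $kL$ from $i$ to $j$ is $1$ precisely when $j=i+kL\le N$ and $0$ otherwise, i.e. $(\A^{kL})_{ij}=1$ exactly when $j=i+kL\le N$. Everything else follows by bookkeeping on the indices.

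First I would identify the restriction set $\V^*$. The row sum $\sum_{j}(\A^{kL})_{ij}$ equals $1$ when $i+kL\le N$ and $0$ otherwise; requiring non-vanishing for all $k=0,1,\dots,m-1$ makes the binding constraint the largest power, $k=m-1$, which gives $\V^*=\{1,2,\dots,N-(m-1)L\}$ with $\card{\V^*}=N-(m-1)L$. This is already significant: it matches exactly the number $N-(m-1)L$ of embedding vectors produced by classical $\DE$. Next I would evaluate the normalisation: for each $i\in\V^*$ the relevant row sum is $1$, so the diagonal normaliser $D_{ii}=1/\sum_j(\A^{kL})_{ij}=1$ for every $k$, and hence $(\textbf{y}^*_k)_i=(\A^{kL}\X)_i=x_{i+kL}$.

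Putting these together, the $i$-th row of $\textbf{Y}^*$ is $\row_i(\textbf{Y}^*)=(x_i,x_{i+L},\dots,x_{i+(m-1)L})$, which is precisely the classical delay-embedding vector anchored at $i$. Because the map $F=G\circ\NCDF$ depends only on the mean $\mu$ and standard deviation $\sigma$ of $\X$ — identical in both settings since the underlying signal is the same — applying $F$ element-wise yields identical class vectors $\row_i(F(\textbf{Y}^*))$. Consequently the realised set of dispersion patterns $\Pi$, the count of each pattern, and the common denominator $\card{\V^*}=N-(m-1)L$ all agree with classical $\DE$, so the relative frequencies $p(\pi)$ coincide and therefore so do the normalised Shannon entropies, giving $\DE(m,L,c)=\DE_{\overrightarrow{P}}(m,L,c)$.

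The step I expect to be the only delicate one is the boundary handling combined with the $k$-dependent normaliser: near the end of the path the row sums of $\A^{kL}$ vanish, so $D_{ii}$ is ill-defined there, and it is exactly the passage to $\V^*$ that discards those vertices and restores the correct count. The hard part is thus not any deep computation but verifying that the binding constraint is genuinely $k=m-1$ and that no smaller $k$ deletes additional vertices; this is immediate since the condition $i+kL\le N$ is weakest for the largest $k$, so no vertex in $\{1,\dots,N-(m-1)L\}$ is lost at any intermediate power.
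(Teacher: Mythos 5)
Your proposal is correct and follows essentially the same route as the paper's own proof: computing $(\A^{kL})_{ij}=1$ iff $j=i+kL\le N$, identifying $\V^*=\{1,\dots,N-(m-1)L\}$, observing that the restricted embedding matrix reproduces the classical delay-embedding rows, and concluding that patterns, frequencies (with common denominator $N-(m-1)L$) and entropies coincide. Your explicit remarks that $D_{ii}=1$ on $\V^*$ and that $F=G\circ\NCDF$ depends only on the mean and standard deviation of $\X$ are nice clarifications of points the paper leaves implicit, but they do not change the argument.
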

\begin{proof}
	The adjacency matrix for the directed path $\A$ is given by 
	\[   
	\A_{ij} = 
	\begin{cases}
		1 &\text{if } \quad i=1,2,\dots,N-1 \quad \text{and} \quad j=i+1  \;,\\
		0 &\text{otherwise }\;. \\
	\end{cases}
	\]
	
	For any $k\in\Na$, the matrix $\A^{k}$ is given by 
	\[   
	\left( \A^k\right) _{ij} = 
	\begin{cases}
		1 &\text{if } \quad i=1,2,\dots,N-k \quad \text{and} \quad j=i+k  \;,\\
		0 &\text{otherwise }\;, \\
	\end{cases}
	\]
	in particular, for all $k=0,1,\dots,m-1$ 	\[   
	\sum\limits_{j=1}^N (\A^{kL})_{ij} = 
	\begin{cases}
		1 &\text{if } \quad i=1,\dots, N-(m-1)L\;,\\
		0 &\text{otherwise } \;.\\
	\end{cases}
	\]
	
	Thus, we have
	\begin{align*}
		\textbf{y}^*_k&=\left.\y_k\right|_{V^*}=\left.D\A^{kL}\textbf{X}\right|_{V^*} \\
		&=[x_{1+kL}, x_{2+kL}, \dots,x_{i+kL}, \dots ,x_{N-(m-1)L}]^T\;.
	\end{align*}
	The \emph{embedding matrix} is given by:
	\[\Y^*=\begin{pmatrix}
		x_1 & x_{1+L}  & \dots & x_{1+(m-1)L}\\
		x_2 & x_{2+L}  & \dots & x_{2+(m-1)L}\\
		\vdots  & \vdots & \ddots & \vdots\\
		x_{N-(m-1)L} & x_{N-(m-2)L} & \dots & x_{N}\\
	\end{pmatrix}\;,\]
	
	and, given a map function $\map{F}{\R}{\N_c}$ defined by $\map{F=G\circ \NCDF}{\R}{\N_c}$, the matrix $F(\Y^*)$ is given by:
	\[F(\Y^*)=\begin{pmatrix}
		z_1 & z_{1+L}  & \dots & z_{1+(m-1)L}\\
		z_2 & z_{2+L}  & \dots & z_{2+(m-1)L}\\
		\vdots & \vdots & \ddots  & \vdots\\
		z_{N-(m-1)L} & z_{N-(m-2)L} &  \dots & z_{N}\\
	\end{pmatrix}\;.\]
	
	Subsequently, the embedding vectors are represented as $\row_i(F(\Y^))=\left(z_i,z_{i+L},\dots z_{i+(m-1)L}\right)$. Due to the fact that $\card{\V^}=N-(m-1)L$, the \emph{relative frequencies} and \emph{Shannon's entropy} associated with the graph-based dispersion entropy ($\DEGD$) and the classical dispersion entropy ($\DE$) are identical.
\end{proof}
\section{Graphs used for analysing centrality measures.}
\begin{table}[h]
	\caption{\label{tab:table1} 
	}
	\begin{ruledtabular}
		\begin{tabular}{lccc}
			\textrm{\textbf{Underlying Graph}}&
			\textrm{$\card{\V}$}&
			\textrm{$\card{\E}$}&
			\textrm{\textbf{Reference}}\\
			\colrule
			Minnesota road network	& 2,642 & 3,303 & \cite{gleich2015matlabbgl} \\
			Social circles: Facebook	& 3,959 & 84,243 &  \cite{leskovec2012learning}\\
			Arxiv GR-QC collaboration & 5,241 & 14,484 &  \cite{leskovec2007graph}\\
			The US power grid	&4,941  & 6,594 & \cite{watts98} \\
			Pointcloud (Stanford Bunny) &2,503  &  13,726& \cite{turk1994zippered} \\
			Sphere &4,000  &  22,630& \cite{perraudin2014gspbox} \\
		\end{tabular}
	\end{ruledtabular}
\end{table}
\section{Comparing $\DEG$ and $\PEG$ Performance}
In this section, we demonstrate the superior performance of the Dispersion Entropy for Graph Signals ($\DEG$) over the Permutation Entropy for Graph Signals, denoted by $\PEG$~\cite{Fabila-Carrasco2022}. By applying both algorithms to all the examples in the manuscript, we consistently observe that $\DEG$ outperforms $\PEG$, highlighting the potential and efficacy of $\DEG$ for analysing graph signal irregularities.

Following the same setting used to produced Fig.~2, 3, 5 and 6 in the manuscript, we substitute $\PEG$ for $\DEG$. The results are depicted in Fig.~\ref{fig:freq_pPEG},~\ref{fig:DEGeigenvaluesPEG},~\ref{fig:timeseriesPEG} and ~\ref{fig:central2PEG}, respectively.

\textbf{Random Graphs and $\PEG$.} The $\PEG$ algorithm is not able to detect increasing of the signal irregularity (due to frequency increments) and is unable to differentiate between distinct levels of irregularity in the $\MIX_G(p)$ signal based on the parameter $p$ (Fig.~\ref{fig:freq_p1PEG}). Similarly, in Fig.~\ref{fig:freq_p2PEG}, as graph connectivity increases (by raising $r$) the algorithm saturates for an embedding dimension of $m=2$. To achieve accurate characterisations, it is necessary to increase $m>2$  and even that, the behaviour is not monotonous, whereas $\DEG$ performs well with smaller embedding dimensions.    
\begin{figure}[h]
	\centering
	\begin{subfigure}[b]{0.49\linewidth}
		\includegraphics[scale=.3]{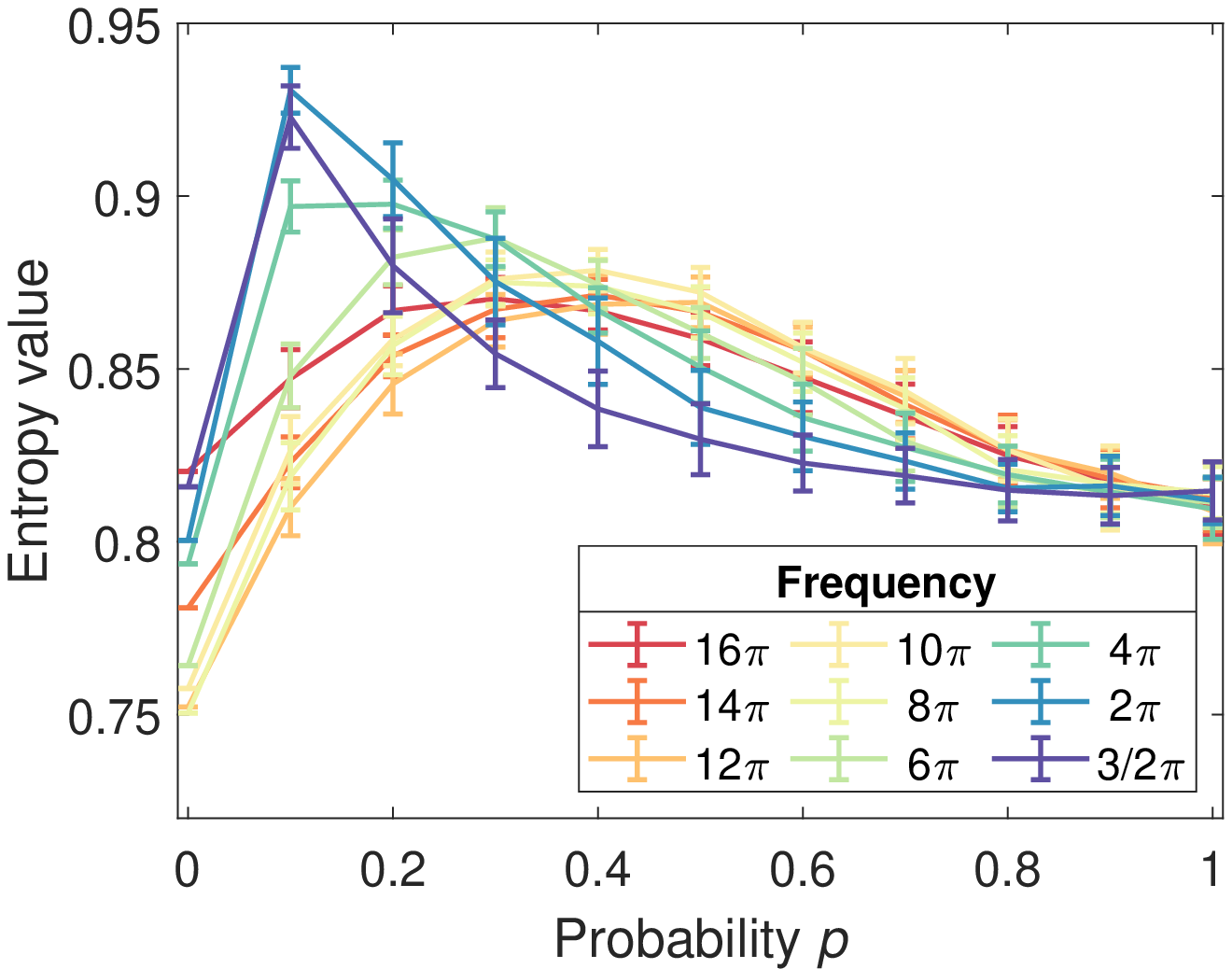} 
		\caption{\label{fig:freq_p1PEG}}
	\end{subfigure}
	\begin{subfigure}[b]{0.49\linewidth}
		\includegraphics[scale=.3]{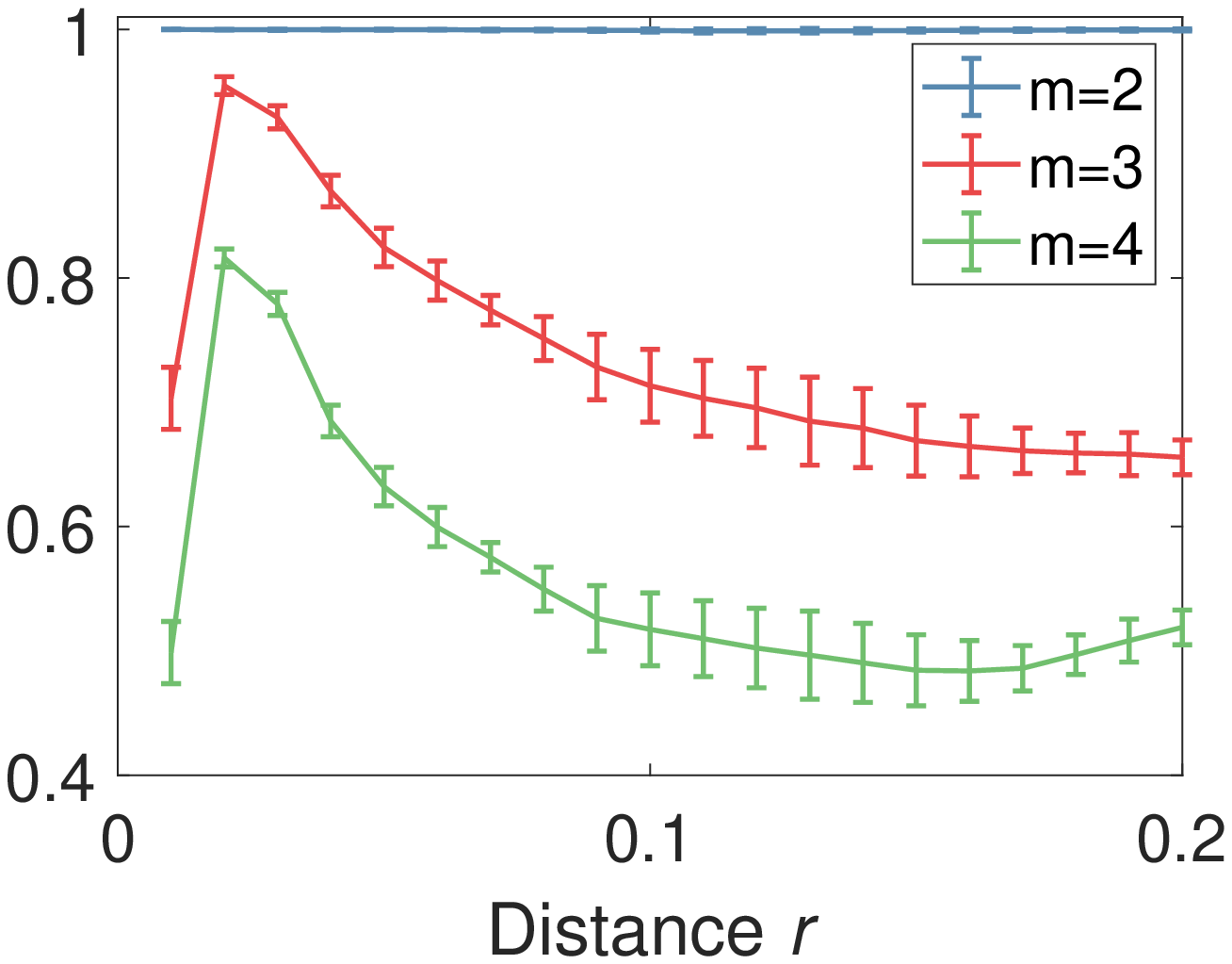}
		\caption{\label{fig:freq_p2PEG}} 
	\end{subfigure}
	\caption{\label{fig:freq_pPEG} Entropy values using $\PEG$ (a) for a fixed graph, increasing the noise and for several frequencies and (b) the underlying graph is more connected.}
\end{figure}

\textbf{The spectrum of the Laplacian and $\PEG$.} The entropy values of $\PEG$ exhibit a highly consistent and regular behaviour, with minimal variations (Fig.~\ref{fig:DEGeigenvaluesPEG}). Despite the varying degrees of irregularity in the eigenvalues (as shown in Fig.~4 of the manuscript), the $\PEG$ algorithm fails to detect these differences.
\begin{figure}[h]
	\centering
	\includegraphics[scale=.2]{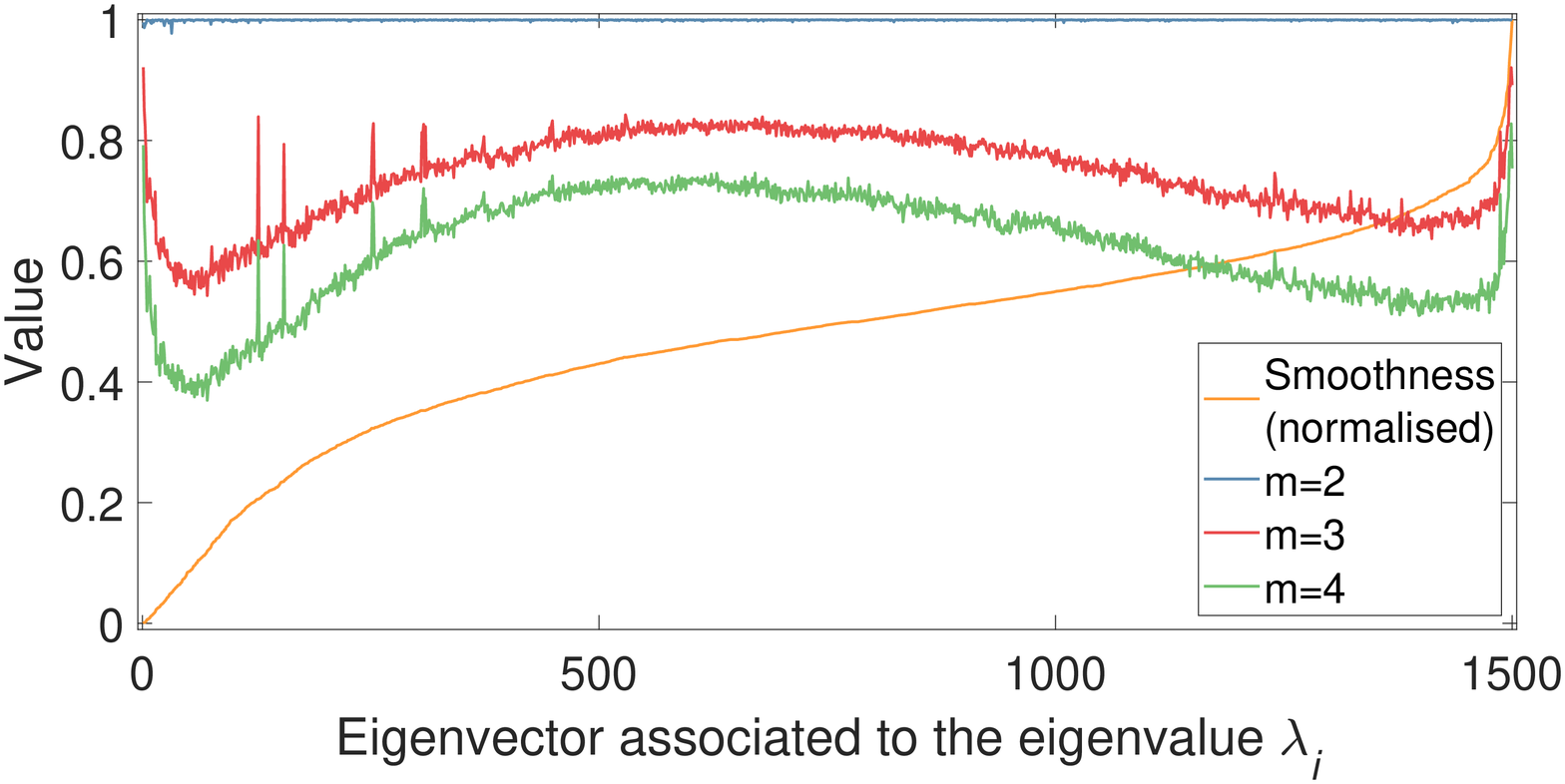} 
	\caption{Entropy values of $\PEG$ and smoothness based on the Laplacian $\Delta$ for the eigenvalues as graph signals.\label{fig:DEGeigenvaluesPEG}}
\end{figure}

\textbf{Small-world Networks and $\PEG$.} The stochastic dynamics of the Wiener process are not adequately characterized by $\PEG$ (Fig.~\ref{fig:timeseries_p1PEG}), as its entropy values are higher than those of random behaviour (random signal). Periodic dynamics are detected only with lower parameter values of $p$, and the chaotic and oscillation behaviour (Logistic map) are identified by $\PEG$, which is consistent with the results presented in~\cite{Fabila-Carrasco2022}. However, as the parameter $k$ is increased (Fig.~\ref{fig:timeseries_p2PEG}), the performance of $\PEG$ remains similar when the parameter $p$ is changed. This is due to $\PEG$ considering the order of the values but not their amplitude.
\begin{figure}
	\centering
	\begin{subfigure}[b]{0.49\linewidth}
		\includegraphics[scale=.3]{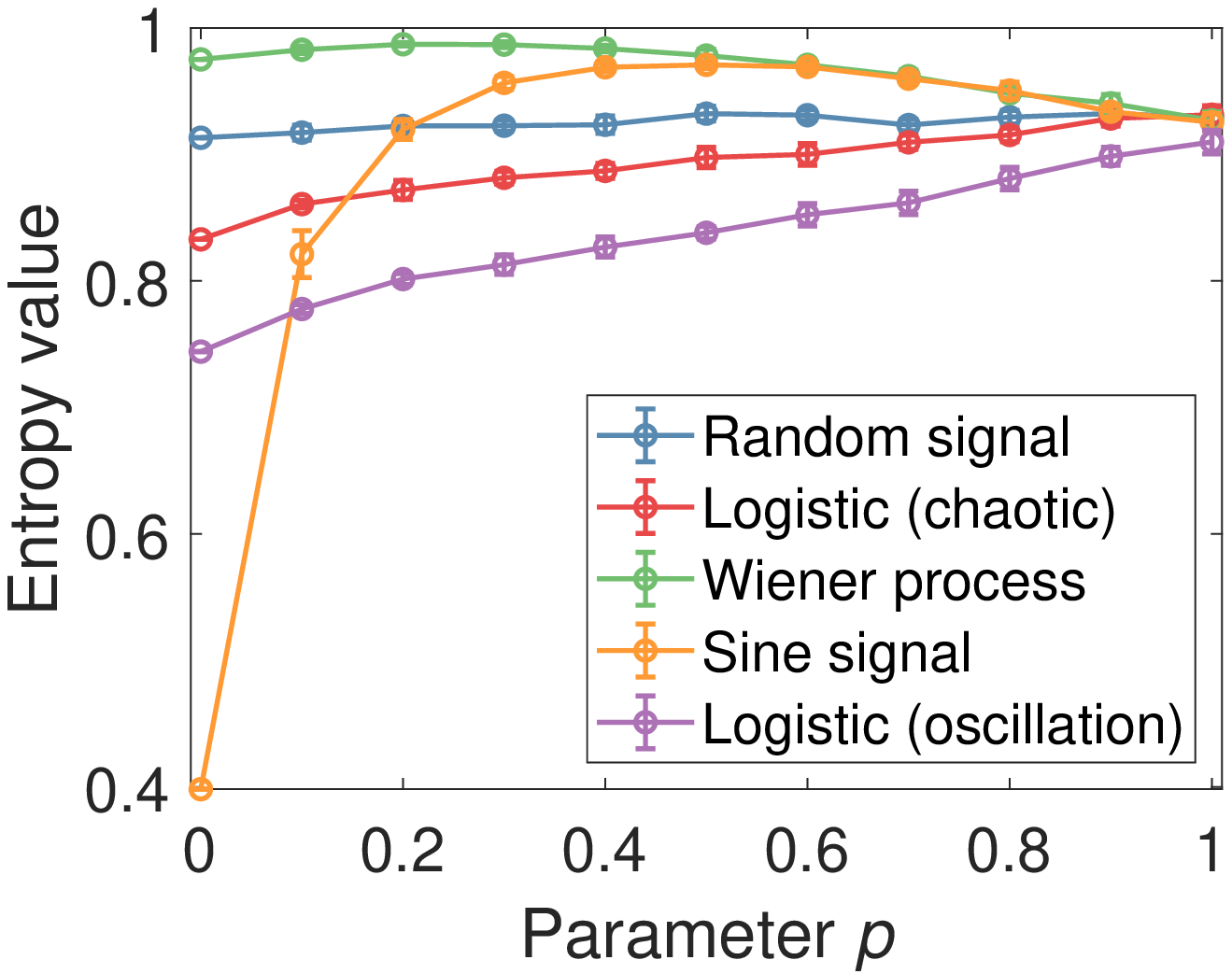} 
		\caption{\label{fig:timeseries_p1PEG} }
	\end{subfigure}
	\begin{subfigure}[b]{0.49\linewidth}
		\includegraphics[scale=.3]{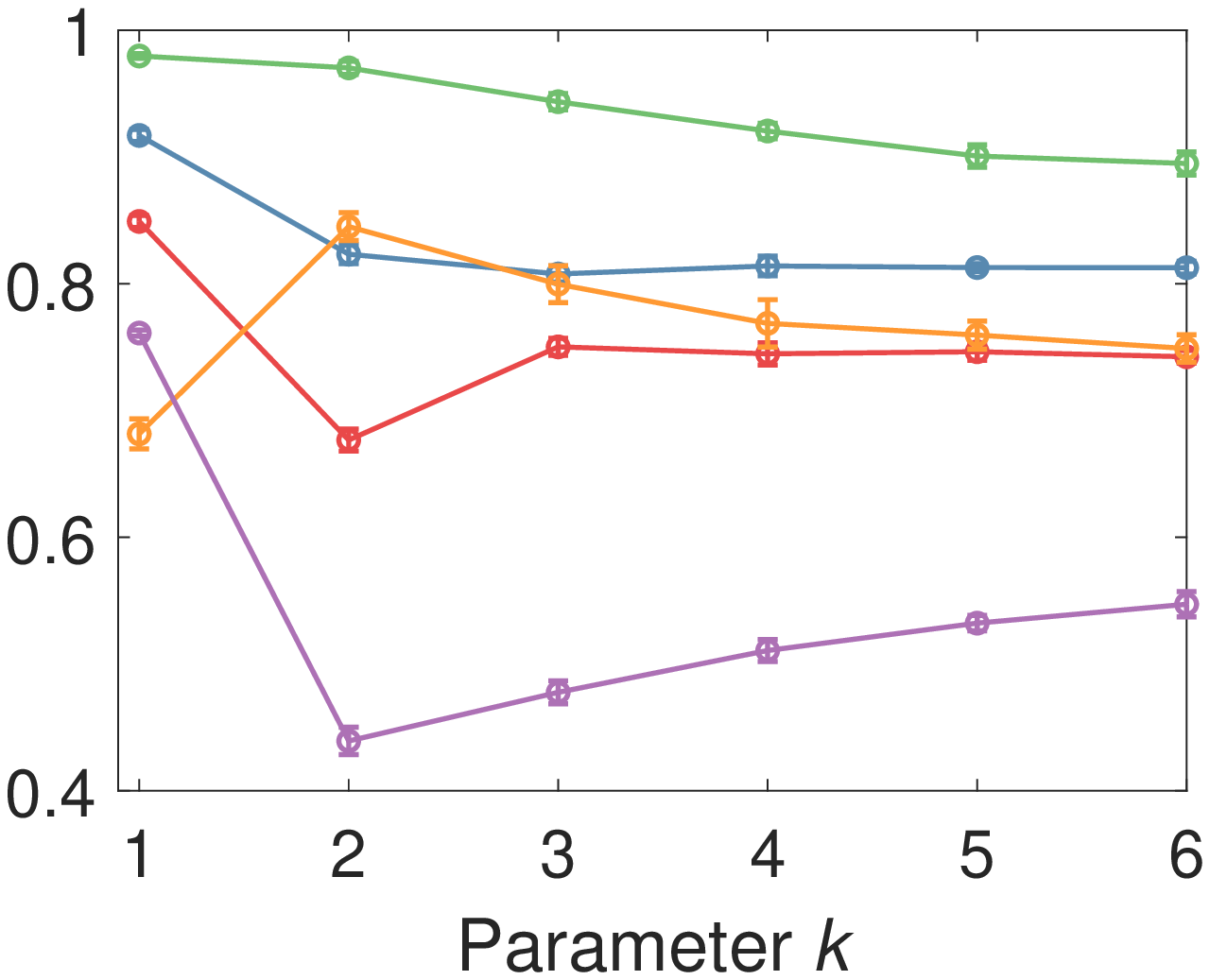}
		\caption{\label{fig:timeseries_p2PEG} } 
	\end{subfigure}
	\caption{\label{fig:timeseriesPEG} Entropy values of $\PEG$ for different signals defined on a small-world network generated by the Watts-Strogatz model.}
\end{figure}

\textbf{Graph Centrality Measures and $\PEG$.} Smooth signals produced by the \emph{Eigenvector Centrality} are not effectively detected by $\PEG$ (with the exception of the Arxiv and Facebook graphs). The remaining centrality measures yield similar entropy values, making it challenging to establish a relationship with $\PEG$ (Fig.~\ref{fig:central2PEG}). This limitation highlights the greater value of $\DEG$ for such analyses.
\begin{figure}
	\centering
	\includegraphics[scale=.2]{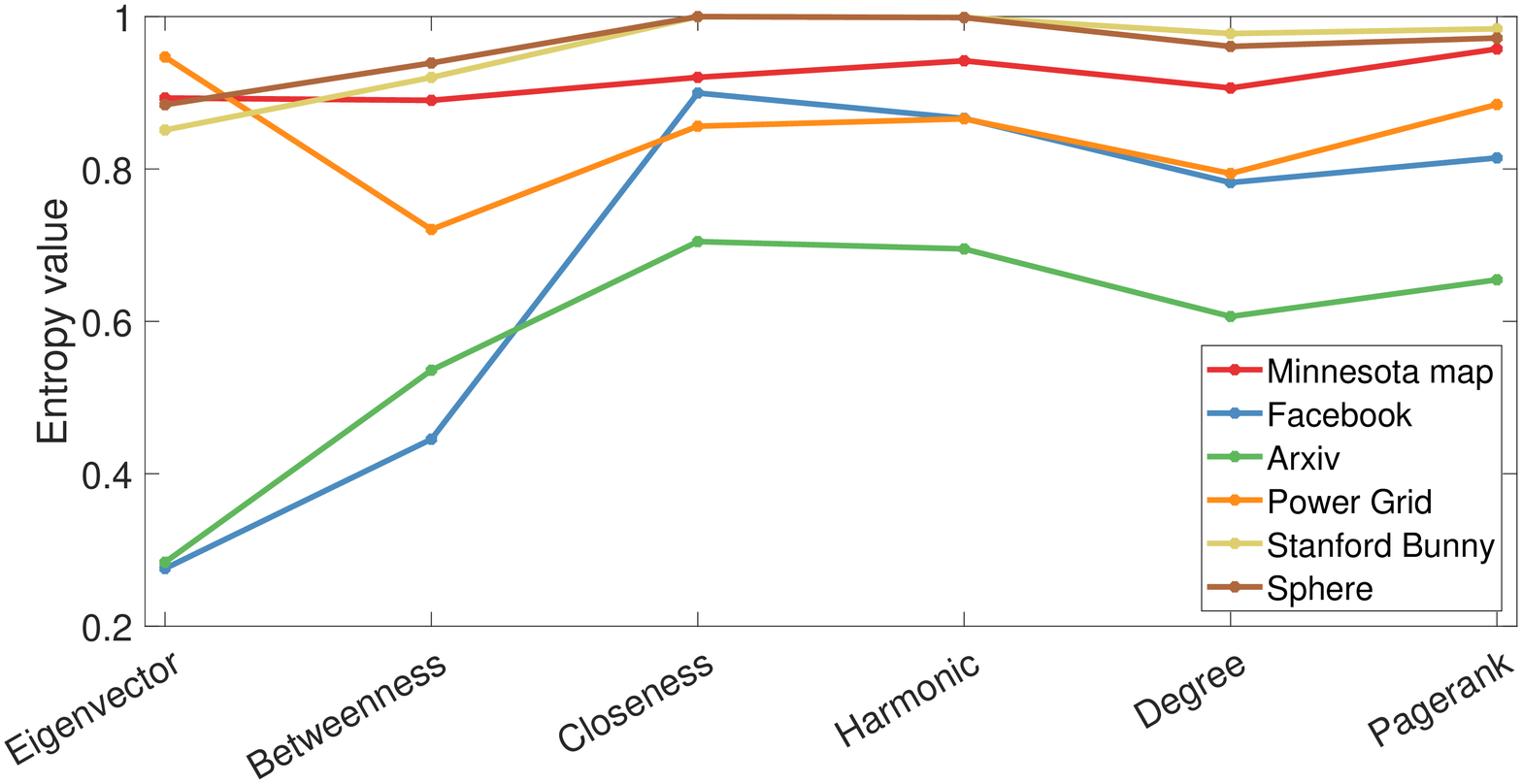}  
	\caption{\label{fig:central2PEG} The permutation entropy for various centrality measures.}
\end{figure}

\bibliography{Dispersion_Entropy_Arxiv}

\providecommand{\noopsort}[1]{}\providecommand{\singleletter}[1]{#1}%
\begin{thebibliography}{36}%
\makeatletter
\providecommand \@ifxundefined [1]{%
 \@ifx{#1\undefined}
}%
\providecommand \@ifnum [1]{%
 \ifnum #1\expandafter \@firstoftwo
 \else \expandafter \@secondoftwo
 \fi
}%
\providecommand \@ifx [1]{%
 \ifx #1\expandafter \@firstoftwo
 \else \expandafter \@secondoftwo
 \fi
}%
\providecommand \natexlab [1]{#1}%
\providecommand \enquote  [1]{``#1''}%
\providecommand \bibnamefont  [1]{#1}%
\providecommand \bibfnamefont [1]{#1}%
\providecommand \citenamefont [1]{#1}%
\providecommand \href@noop [0]{\@secondoftwo}%
\providecommand \href [0]{\begingroup \@sanitize@url \@href}%
\providecommand \@href[1]{\@@startlink{#1}\@@href}%
\providecommand \@@href[1]{\endgroup#1\@@endlink}%
\providecommand \@sanitize@url [0]{\catcode `\\12\catcode `\$12\catcode
  `\&12\catcode `\#12\catcode `\^12\catcode `\_12\catcode `\%12\relax}%
\providecommand \@@startlink[1]{}%
\providecommand \@@endlink[0]{}%
\providecommand \url  [0]{\begingroup\@sanitize@url \@url }%
\providecommand \@url [1]{\endgroup\@href {#1}{\urlprefix }}%
\providecommand \urlprefix  [0]{URL }%
\providecommand \Eprint [0]{\href }%
\providecommand \doibase [0]{https://doi.org/}%
\providecommand \selectlanguage [0]{\@gobble}%
\providecommand \bibinfo  [0]{\@secondoftwo}%
\providecommand \bibfield  [0]{\@secondoftwo}%
\providecommand \translation [1]{[#1]}%
\providecommand \BibitemOpen [0]{}%
\providecommand \bibitemStop [0]{}%
\providecommand \bibitemNoStop [0]{.\EOS\space}%
\providecommand \EOS [0]{\spacefactor3000\relax}%
\providecommand \BibitemShut  [1]{\csname bibitem#1\endcsname}%
\let\auto@bib@innerbib\@empty
\bibitem [{\citenamefont {Bandt}\ and\ \citenamefont
  {Pompe}(2002)}]{Bandt2002}%
  \BibitemOpen
  \bibfield  {author} {\bibinfo {author} {\bibfnamefont {C.}~\bibnamefont
  {Bandt}}\ and\ \bibinfo {author} {\bibfnamefont {B.}~\bibnamefont {Pompe}},\
  }\bibfield  {title} {\bibinfo {title} {Permutation entropy: A natural
  complexity measure for time series},\ }\href
  {https://doi.org/10.1103/PhysRevLett.88.174102} {\bibfield  {journal}
  {\bibinfo  {journal} {Physical Review Letters}\ }\textbf {\bibinfo {volume}
  {88}},\ \bibinfo {pages} {174102} (\bibinfo {year} {2002})}\BibitemShut
  {NoStop}%
\bibitem [{\citenamefont {Cao}\ \emph {et~al.}(2004)\citenamefont {Cao},
  \citenamefont {Tung}, \citenamefont {Gao}, \citenamefont {Protopopescu},\
  and\ \citenamefont {Hively}}]{cao2004detecting}%
  \BibitemOpen
  \bibfield  {author} {\bibinfo {author} {\bibfnamefont {Y.}~\bibnamefont
  {Cao}}, \bibinfo {author} {\bibfnamefont {W.-w.}\ \bibnamefont {Tung}},
  \bibinfo {author} {\bibfnamefont {J.}~\bibnamefont {Gao}}, \bibinfo {author}
  {\bibfnamefont {V.~A.}\ \bibnamefont {Protopopescu}},\ and\ \bibinfo {author}
  {\bibfnamefont {L.~M.}\ \bibnamefont {Hively}},\ }\bibfield  {title}
  {\bibinfo {title} {Detecting dynamical changes in time series using the
  permutation entropy},\ }\href@noop {} {\bibfield  {journal} {\bibinfo
  {journal} {Phys. Rev. E}\ }\textbf {\bibinfo {volume} {70}},\ \bibinfo
  {pages} {046217} (\bibinfo {year} {2004})}\BibitemShut {NoStop}%
\bibitem [{\citenamefont {Rostaghi}\ and\ \citenamefont
  {Azami}(2016)}]{Rostaghi2016}%
  \BibitemOpen
  \bibfield  {author} {\bibinfo {author} {\bibfnamefont {M.}~\bibnamefont
  {Rostaghi}}\ and\ \bibinfo {author} {\bibfnamefont {H.}~\bibnamefont
  {Azami}},\ }\bibfield  {title} {\bibinfo {title} {Dispersion entropy: A
  measure for time-series analysis},\ }\href@noop {} {\bibfield  {journal}
  {\bibinfo  {journal} {IEEE Signal Processing Letters}\ }\textbf {\bibinfo
  {volume} {23}},\ \bibinfo {pages} {610} (\bibinfo {year} {2016})}\BibitemShut
  {NoStop}%
\bibitem [{\citenamefont {Azami}\ \emph {et~al.}(2017)\citenamefont {Azami},
  \citenamefont {Rostaghi}, \citenamefont {Ab{\'a}solo},\ and\ \citenamefont
  {Escudero}}]{azami17}%
  \BibitemOpen
  \bibfield  {author} {\bibinfo {author} {\bibfnamefont {H.}~\bibnamefont
  {Azami}}, \bibinfo {author} {\bibfnamefont {M.}~\bibnamefont {Rostaghi}},
  \bibinfo {author} {\bibfnamefont {D.}~\bibnamefont {Ab{\'a}solo}},\ and\
  \bibinfo {author} {\bibfnamefont {J.}~\bibnamefont {Escudero}},\ }\bibfield
  {title} {\bibinfo {title} {Refined composite multiscale dispersion entropy
  and its application to biomedical signals},\ }\href@noop {} {\bibfield
  {journal} {\bibinfo  {journal} {IEEE Transactions on Biomedical Engineering}\
  }\textbf {\bibinfo {volume} {64}},\ \bibinfo {pages} {2872} (\bibinfo {year}
  {2017})}\BibitemShut {NoStop}%
\bibitem [{\citenamefont {Rostaghi}\ \emph {et~al.}(2019)\citenamefont
  {Rostaghi}, \citenamefont {Ashory},\ and\ \citenamefont
  {Azami}}]{rostaghi2019application}%
  \BibitemOpen
  \bibfield  {author} {\bibinfo {author} {\bibfnamefont {M.}~\bibnamefont
  {Rostaghi}}, \bibinfo {author} {\bibfnamefont {M.~R.}\ \bibnamefont
  {Ashory}},\ and\ \bibinfo {author} {\bibfnamefont {H.}~\bibnamefont
  {Azami}},\ }\bibfield  {title} {\bibinfo {title} {Application of dispersion
  entropy to status characterization of rotary machines},\ }\href@noop {}
  {\bibfield  {journal} {\bibinfo  {journal} {Journal of Sound and Vibration}\
  }\textbf {\bibinfo {volume} {438}},\ \bibinfo {pages} {291} (\bibinfo {year}
  {2019})}\BibitemShut {NoStop}%
\bibitem [{\citenamefont {Huang}\ \emph {et~al.}(2018)\citenamefont {Huang},
  \citenamefont {Bolton}, \citenamefont {Medaglia}, \citenamefont {Bassett},
  \citenamefont {Ribeiro},\ and\ \citenamefont {Van
  De~Ville}}]{huang2018graph}%
  \BibitemOpen
  \bibfield  {author} {\bibinfo {author} {\bibfnamefont {W.}~\bibnamefont
  {Huang}}, \bibinfo {author} {\bibfnamefont {T.~A.}\ \bibnamefont {Bolton}},
  \bibinfo {author} {\bibfnamefont {J.~D.}\ \bibnamefont {Medaglia}}, \bibinfo
  {author} {\bibfnamefont {D.~S.}\ \bibnamefont {Bassett}}, \bibinfo {author}
  {\bibfnamefont {A.}~\bibnamefont {Ribeiro}},\ and\ \bibinfo {author}
  {\bibfnamefont {D.}~\bibnamefont {Van De~Ville}},\ }\bibfield  {title}
  {\bibinfo {title} {A graph signal processing perspective on functional brain
  imaging},\ }\href@noop {} {\bibfield  {journal} {\bibinfo  {journal} {Proc.
  IEEE}\ }\textbf {\bibinfo {volume} {106}},\ \bibinfo {pages} {868} (\bibinfo
  {year} {2018})}\BibitemShut {NoStop}%
\bibitem [{\citenamefont {Ortega}\ \emph {et~al.}(2018)\citenamefont {Ortega},
  \citenamefont {Frossard}, \citenamefont {Kova{\v{c}}evi{\'c}}, \citenamefont
  {Moura},\ and\ \citenamefont {Vandergheynst}}]{ortega18}%
  \BibitemOpen
  \bibfield  {author} {\bibinfo {author} {\bibfnamefont {A.}~\bibnamefont
  {Ortega}}, \bibinfo {author} {\bibfnamefont {P.}~\bibnamefont {Frossard}},
  \bibinfo {author} {\bibfnamefont {J.}~\bibnamefont {Kova{\v{c}}evi{\'c}}},
  \bibinfo {author} {\bibfnamefont {J.~M.~F.}\ \bibnamefont {Moura}},\ and\
  \bibinfo {author} {\bibfnamefont {P.}~\bibnamefont {Vandergheynst}},\
  }\bibfield  {title} {\bibinfo {title} {Graph signal processing: Overview,
  challenges, and applications},\ }\href@noop {} {\bibfield  {journal}
  {\bibinfo  {journal} {Proceedings of the IEEE}\ }\textbf {\bibinfo {volume}
  {106}},\ \bibinfo {pages} {808} (\bibinfo {year} {2018})}\BibitemShut
  {NoStop}%
\bibitem [{\citenamefont {Fabila-Carrasco}\ \emph
  {et~al.}(2022{\natexlab{a}})\citenamefont {Fabila-Carrasco}, \citenamefont
  {Tan},\ and\ \citenamefont {Escudero}}]{fabila2022noise}%
  \BibitemOpen
  \bibfield  {author} {\bibinfo {author} {\bibfnamefont {J.~S.}\ \bibnamefont
  {Fabila-Carrasco}}, \bibinfo {author} {\bibfnamefont {C.}~\bibnamefont
  {Tan}},\ and\ \bibinfo {author} {\bibfnamefont {J.}~\bibnamefont
  {Escudero}},\ }\bibfield  {title} {\bibinfo {title} {A noise-robust
  multivariate multiscale permutation entropy for two-phase flow
  characterisation},\ }\href@noop {} {\bibfield  {journal} {\bibinfo  {journal}
  {arXiv preprint arXiv:2210.09030}\ } (\bibinfo {year}
  {2022}{\natexlab{a}})}\BibitemShut {NoStop}%
\bibitem [{\citenamefont {Morel}\ and\ \citenamefont
  {Humeau-Heurtier}(2021)}]{Morel2021}%
  \BibitemOpen
  \bibfield  {author} {\bibinfo {author} {\bibfnamefont {C.}~\bibnamefont
  {Morel}}\ and\ \bibinfo {author} {\bibfnamefont {A.}~\bibnamefont
  {Humeau-Heurtier}},\ }\bibfield  {title} {\bibinfo {title} {Multiscale
  permutation entropy for two-dimensional patterns},\ }\href
  {https://doi.org/10.1016/J.PATREC.2021.06.028} {\bibfield  {journal}
  {\bibinfo  {journal} {Pattern Recognition Letters}\ }\textbf {\bibinfo
  {volume} {150}},\ \bibinfo {pages} {139} (\bibinfo {year}
  {2021})}\BibitemShut {NoStop}%
\bibitem [{\citenamefont {Fabila-Carrasco}\ \emph
  {et~al.}(2022{\natexlab{b}})\citenamefont {Fabila-Carrasco}, \citenamefont
  {Tan},\ and\ \citenamefont {Escudero}}]{Fabila-Carrasco2022}%
  \BibitemOpen
  \bibfield  {author} {\bibinfo {author} {\bibfnamefont {J.~S.}\ \bibnamefont
  {Fabila-Carrasco}}, \bibinfo {author} {\bibfnamefont {C.}~\bibnamefont
  {Tan}},\ and\ \bibinfo {author} {\bibfnamefont {J.}~\bibnamefont
  {Escudero}},\ }\bibfield  {title} {\bibinfo {title} {Permutation entropy for
  graph signals},\ }\href@noop {} {\bibfield  {journal} {\bibinfo  {journal}
  {IEEE Transactions on Signal and Information Processing over Networks}\
  }\textbf {\bibinfo {volume} {8}},\ \bibinfo {pages} {288} (\bibinfo {year}
  {2022}{\natexlab{b}})}\BibitemShut {NoStop}%
\bibitem [{\citenamefont {Azami}\ \emph {et~al.}(2019)\citenamefont {Azami},
  \citenamefont {da~Silva}, \citenamefont {Omoto},\ and\ \citenamefont
  {Humeau-Heurtier}}]{azami19b}%
  \BibitemOpen
  \bibfield  {author} {\bibinfo {author} {\bibfnamefont {H.}~\bibnamefont
  {Azami}}, \bibinfo {author} {\bibfnamefont {L.~E.~V.}\ \bibnamefont
  {da~Silva}}, \bibinfo {author} {\bibfnamefont {A.~C.~M.}\ \bibnamefont
  {Omoto}},\ and\ \bibinfo {author} {\bibfnamefont {A.}~\bibnamefont
  {Humeau-Heurtier}},\ }\bibfield  {title} {\bibinfo {title} {Two-dimensional
  dispersion entropy: An information-theoretic method for irregularity analysis
  of images},\ }\href@noop {} {\bibfield  {journal} {\bibinfo  {journal}
  {Signal Processing: Image Communication}\ }\textbf {\bibinfo {volume} {75}},\
  \bibinfo {pages} {178} (\bibinfo {year} {2019})}\BibitemShut {NoStop}%
\bibitem [{\citenamefont {Shuman}\ \emph {et~al.}(2013)\citenamefont {Shuman},
  \citenamefont {Narang}, \citenamefont {Frossard}, \citenamefont {Ortega},\
  and\ \citenamefont {Vandergheynst}}]{shuman2013emerging}%
  \BibitemOpen
  \bibfield  {author} {\bibinfo {author} {\bibfnamefont {D.~I.}\ \bibnamefont
  {Shuman}}, \bibinfo {author} {\bibfnamefont {S.~K.}\ \bibnamefont {Narang}},
  \bibinfo {author} {\bibfnamefont {P.}~\bibnamefont {Frossard}}, \bibinfo
  {author} {\bibfnamefont {A.}~\bibnamefont {Ortega}},\ and\ \bibinfo {author}
  {\bibfnamefont {P.}~\bibnamefont {Vandergheynst}},\ }\bibfield  {title}
  {\bibinfo {title} {The emerging field of signal processing on graphs:
  Extending high-dimensional data analysis to networks and other irregular
  domains},\ }\href@noop {} {\bibfield  {journal} {\bibinfo  {journal} {IEEE
  signal processing magazine}\ }\textbf {\bibinfo {volume} {30}},\ \bibinfo
  {pages} {83} (\bibinfo {year} {2013})}\BibitemShut {NoStop}%
\bibitem [{\citenamefont {Dong}\ \emph {et~al.}(2016)\citenamefont {Dong},
  \citenamefont {Thanou}, \citenamefont {Frossard},\ and\ \citenamefont
  {Vandergheynst}}]{dong2016learning}%
  \BibitemOpen
  \bibfield  {author} {\bibinfo {author} {\bibfnamefont {X.}~\bibnamefont
  {Dong}}, \bibinfo {author} {\bibfnamefont {D.}~\bibnamefont {Thanou}},
  \bibinfo {author} {\bibfnamefont {P.}~\bibnamefont {Frossard}},\ and\
  \bibinfo {author} {\bibfnamefont {P.}~\bibnamefont {Vandergheynst}},\
  }\bibfield  {title} {\bibinfo {title} {Learning laplacian matrix in smooth
  graph signal representations},\ }\href@noop {} {\bibfield  {journal}
  {\bibinfo  {journal} {IEEE Trans. Signal Process.}\ }\textbf {\bibinfo
  {volume} {64}},\ \bibinfo {pages} {6160} (\bibinfo {year}
  {2016})}\BibitemShut {NoStop}%
\bibitem [{\citenamefont {Stankovi{\'c}}\ \emph {et~al.}(2019)\citenamefont
  {Stankovi{\'c}}, \citenamefont {Dakovi{\'c}},\ and\ \citenamefont
  {Sejdi{\'c}}}]{stankovic2019introduction}%
  \BibitemOpen
  \bibfield  {author} {\bibinfo {author} {\bibfnamefont {L.}~\bibnamefont
  {Stankovi{\'c}}}, \bibinfo {author} {\bibfnamefont {M.}~\bibnamefont
  {Dakovi{\'c}}},\ and\ \bibinfo {author} {\bibfnamefont {E.}~\bibnamefont
  {Sejdi{\'c}}},\ }\bibfield  {title} {\bibinfo {title} {Introduction to graph
  signal processing},\ }\href@noop {} {\bibfield  {journal} {\bibinfo
  {journal} {Vertex-Frequency Analysis of Graph Signals}\ ,\ \bibinfo {pages}
  {3}} (\bibinfo {year} {2019})}\BibitemShut {NoStop}%
\bibitem [{\citenamefont {Kenniche}\ and\ \citenamefont
  {Ravelomananana}(2010)}]{kenniche2010random}%
  \BibitemOpen
  \bibfield  {author} {\bibinfo {author} {\bibfnamefont {H.}~\bibnamefont
  {Kenniche}}\ and\ \bibinfo {author} {\bibfnamefont {V.}~\bibnamefont
  {Ravelomananana}},\ }\bibfield  {title} {\bibinfo {title} {Random geometric
  graphs as model of wireless sensor networks},\ }in\ \href@noop {} {\emph
  {\bibinfo {booktitle} {2010 The 2nd International Conference on Computer and
  Automation Engineering (ICCAE)}}},\ Vol.~\bibinfo {volume} {4}\ (\bibinfo
  {organization} {IEEE},\ \bibinfo {year} {2010})\ pp.\ \bibinfo {pages}
  {103--107}\BibitemShut {NoStop}%
\bibitem [{\citenamefont {Watts}\ and\ \citenamefont
  {Strogatz}(1998)}]{watts98}%
  \BibitemOpen
  \bibfield  {author} {\bibinfo {author} {\bibfnamefont {D.~J.}\ \bibnamefont
  {Watts}}\ and\ \bibinfo {author} {\bibfnamefont {S.~H.}\ \bibnamefont
  {Strogatz}},\ }\bibfield  {title} {\bibinfo {title} {Collective dynamics of
  'small-world' networks},\ }\href@noop {} {\bibfield  {journal} {\bibinfo
  {journal} {Nature}\ }\textbf {\bibinfo {volume} {393}},\ \bibinfo {pages}
  {440} (\bibinfo {year} {1998})}\BibitemShut {NoStop}%
\bibitem [{\citenamefont {Newman}\ \emph {et~al.}(2001)\citenamefont {Newman},
  \citenamefont {Strogatz},\ and\ \citenamefont {Watts}}]{newman2001random}%
  \BibitemOpen
  \bibfield  {author} {\bibinfo {author} {\bibfnamefont {M.~E.}\ \bibnamefont
  {Newman}}, \bibinfo {author} {\bibfnamefont {S.~H.}\ \bibnamefont
  {Strogatz}},\ and\ \bibinfo {author} {\bibfnamefont {D.~J.}\ \bibnamefont
  {Watts}},\ }\bibfield  {title} {\bibinfo {title} {Random graphs with
  arbitrary degree distributions and their applications},\ }\href@noop {}
  {\bibfield  {journal} {\bibinfo  {journal} {Phys. Rev. E}\ }\textbf {\bibinfo
  {volume} {64}},\ \bibinfo {pages} {026118} (\bibinfo {year}
  {2001})}\BibitemShut {NoStop}%
\bibitem [{\citenamefont {Newman}(2000)}]{newman2000models}%
  \BibitemOpen
  \bibfield  {author} {\bibinfo {author} {\bibfnamefont {M.~E.}\ \bibnamefont
  {Newman}},\ }\bibfield  {title} {\bibinfo {title} {Models of the small
  world},\ }\href@noop {} {\bibfield  {journal} {\bibinfo  {journal} {J. Stat.
  Phys.}\ }\textbf {\bibinfo {volume} {101}},\ \bibinfo {pages} {819} (\bibinfo
  {year} {2000})}\BibitemShut {NoStop}%
\bibitem [{\citenamefont {Pincus}\ and\ \citenamefont
  {Goldberger}(1994)}]{Pincus1994}%
  \BibitemOpen
  \bibfield  {author} {\bibinfo {author} {\bibfnamefont {S.~M.}\ \bibnamefont
  {Pincus}}\ and\ \bibinfo {author} {\bibfnamefont {A.~L.}\ \bibnamefont
  {Goldberger}},\ }\bibfield  {title} {\bibinfo {title} {Physiological
  time-series analysis: What does regularity quantify?},\ }\href@noop {}
  {\bibfield  {journal} {\bibinfo  {journal} {American Journal of Physiology -
  Heart and Circulatory Physiology}\ }\textbf {\bibinfo {volume} {266}}
  (\bibinfo {year} {1994})}\BibitemShut {NoStop}%
\bibitem [{\citenamefont {Azami}\ and\ \citenamefont
  {Escudero}(2018)}]{Azami2018}%
  \BibitemOpen
  \bibfield  {author} {\bibinfo {author} {\bibfnamefont {H.}~\bibnamefont
  {Azami}}\ and\ \bibinfo {author} {\bibfnamefont {J.}~\bibnamefont
  {Escudero}},\ }\bibfield  {title} {\bibinfo {title} {Amplitude-and
  fluctuation-based dispersion entropy},\ }\href@noop {} {\bibfield  {journal}
  {\bibinfo  {journal} {Entropy}\ }\textbf {\bibinfo {volume} {20}},\ \bibinfo
  {pages} {210} (\bibinfo {year} {2018})}\BibitemShut {NoStop}%
\bibitem [{\citenamefont {Silva}(2016)}]{Silva2016}%
  \BibitemOpen
  \bibfield  {author} {\bibinfo {author} {\bibfnamefont {L.~E.~V.}\
  \bibnamefont {Silva}},\ }\bibfield  {title} {\bibinfo {title}
  {Two-dimensional sample entropy: Assessing image texture through
  irregularity},\ }\href@noop {} {\bibfield  {journal} {\bibinfo  {journal}
  {Biomedical Physics and Engineering Express}\ }\textbf {\bibinfo {volume}
  {2}},\ \bibinfo {pages} {45002} (\bibinfo {year} {2016})}\BibitemShut
  {NoStop}%
\bibitem [{\citenamefont {Borgatti}\ and\ \citenamefont
  {Everett}(2006)}]{borgatti2006graph}%
  \BibitemOpen
  \bibfield  {author} {\bibinfo {author} {\bibfnamefont {S.~P.}\ \bibnamefont
  {Borgatti}}\ and\ \bibinfo {author} {\bibfnamefont {M.~G.}\ \bibnamefont
  {Everett}},\ }\bibfield  {title} {\bibinfo {title} {A graph-theoretic
  perspective on centrality},\ }\href@noop {} {\bibfield  {journal} {\bibinfo
  {journal} {Social Networks}\ }\textbf {\bibinfo {volume} {28}},\ \bibinfo
  {pages} {466} (\bibinfo {year} {2006})}\BibitemShut {NoStop}%
\bibitem [{\citenamefont {Das}\ \emph {et~al.}(2018)\citenamefont {Das},
  \citenamefont {Samanta},\ and\ \citenamefont {Pal}}]{das2018study}%
  \BibitemOpen
  \bibfield  {author} {\bibinfo {author} {\bibfnamefont {K.}~\bibnamefont
  {Das}}, \bibinfo {author} {\bibfnamefont {S.}~\bibnamefont {Samanta}},\ and\
  \bibinfo {author} {\bibfnamefont {M.}~\bibnamefont {Pal}},\ }\bibfield
  {title} {\bibinfo {title} {Study on centrality measures in social networks: a
  survey},\ }\href@noop {} {\bibfield  {journal} {\bibinfo  {journal} {Social
  Network Analysis and Mining}\ }\textbf {\bibinfo {volume} {8}},\ \bibinfo
  {pages} {1} (\bibinfo {year} {2018})}\BibitemShut {NoStop}%
\bibitem [{\citenamefont {Dall}\ and\ \citenamefont
  {Christensen}(2002)}]{dall2002random}%
  \BibitemOpen
  \bibfield  {author} {\bibinfo {author} {\bibfnamefont {J.}~\bibnamefont
  {Dall}}\ and\ \bibinfo {author} {\bibfnamefont {M.}~\bibnamefont
  {Christensen}},\ }\bibfield  {title} {\bibinfo {title} {Random geometric
  graphs},\ }\href@noop {} {\bibfield  {journal} {\bibinfo  {journal} {Physical
  Review E}\ }\textbf {\bibinfo {volume} {66}},\ \bibinfo {pages} {016121}
  (\bibinfo {year} {2002})}\BibitemShut {NoStop}%
\bibitem [{sup()}]{supplemental}%
  \BibitemOpen
  \bibfield  {title} {\bibinfo {title} {See supplemental material at link},\
  }\href@noop {} {\ }\BibitemShut {NoStop}%
\bibitem [{\citenamefont {Girault}\ \emph {et~al.}(2018)\citenamefont
  {Girault}, \citenamefont {Ortega},\ and\ \citenamefont
  {Narayanan}}]{girault2018irregularity}%
  \BibitemOpen
  \bibfield  {author} {\bibinfo {author} {\bibfnamefont {B.}~\bibnamefont
  {Girault}}, \bibinfo {author} {\bibfnamefont {A.}~\bibnamefont {Ortega}},\
  and\ \bibinfo {author} {\bibfnamefont {S.~S.}\ \bibnamefont {Narayanan}},\
  }\bibfield  {title} {\bibinfo {title} {Irregularity-aware graph fourier
  transforms},\ }\href@noop {} {\bibfield  {journal} {\bibinfo  {journal} {IEEE
  Trans. Signal Process.}\ }\textbf {\bibinfo {volume} {66}},\ \bibinfo {pages}
  {5746} (\bibinfo {year} {2018})}\BibitemShut {NoStop}%
\bibitem [{\citenamefont {Fabila-Carrasco}\ \emph
  {et~al.}(2022{\natexlab{c}})\citenamefont {Fabila-Carrasco}, \citenamefont
  {Lled{\'o}},\ and\ \citenamefont {Post}}]{fabila2022geometric}%
  \BibitemOpen
  \bibfield  {author} {\bibinfo {author} {\bibfnamefont {J.~S.}\ \bibnamefont
  {Fabila-Carrasco}}, \bibinfo {author} {\bibfnamefont {F.}~\bibnamefont
  {Lled{\'o}}},\ and\ \bibinfo {author} {\bibfnamefont {O.}~\bibnamefont
  {Post}},\ }\bibfield  {title} {\bibinfo {title} {A geometric construction of
  isospectral magnetic graphs},\ }\href@noop {} {\bibfield  {journal} {\bibinfo
   {journal} {arXiv preprint arXiv:2208.07280}\ } (\bibinfo {year}
  {2022}{\natexlab{c}})}\BibitemShut {NoStop}%
\bibitem [{\citenamefont {Wiener}(1938)}]{wiener1938homogeneous}%
  \BibitemOpen
  \bibfield  {author} {\bibinfo {author} {\bibfnamefont {N.}~\bibnamefont
  {Wiener}},\ }\bibfield  {title} {\bibinfo {title} {The homogeneous chaos},\
  }\href@noop {} {\bibfield  {journal} {\bibinfo  {journal} {Am. J. Math.}\
  }\textbf {\bibinfo {volume} {60}},\ \bibinfo {pages} {897} (\bibinfo {year}
  {1938})}\BibitemShut {NoStop}%
\bibitem [{\citenamefont {Fabila-Carrasco}\ \emph
  {et~al.}(2022{\natexlab{d}})\citenamefont {Fabila-Carrasco}, \citenamefont
  {Lled{\'o}},\ and\ \citenamefont {Post}}]{fabila2022spectral}%
  \BibitemOpen
  \bibfield  {author} {\bibinfo {author} {\bibfnamefont {J.~S.}\ \bibnamefont
  {Fabila-Carrasco}}, \bibinfo {author} {\bibfnamefont {F.}~\bibnamefont
  {Lled{\'o}}},\ and\ \bibinfo {author} {\bibfnamefont {O.}~\bibnamefont
  {Post}},\ }\bibfield  {title} {\bibinfo {title} {Spectral preorder and
  perturbations of discrete weighted graphs},\ }\href@noop {} {\bibfield
  {journal} {\bibinfo  {journal} {Mathematische Annalen}\ }\textbf {\bibinfo
  {volume} {382}},\ \bibinfo {pages} {1775} (\bibinfo {year}
  {2022}{\natexlab{d}})}\BibitemShut {NoStop}%
\bibitem [{\citenamefont {Fabila-Carrasco}(2020)}]{carrasco2020discrete}%
  \BibitemOpen
  \bibfield  {author} {\bibinfo {author} {\bibfnamefont {J.~S.}\ \bibnamefont
  {Fabila-Carrasco}},\ }\emph {\bibinfo {title} {The discrete magnetic
  laplacian: geometric and spectral preorders with applications}},\ \href@noop
  {} {Ph.D. thesis},\ \bibinfo  {school} {Universidad Carlos III de Madrid}
  (\bibinfo {year} {2020})\BibitemShut {NoStop}%
\bibitem [{\citenamefont {Fabila-Carrasco}\ \emph {et~al.}(2018)\citenamefont
  {Fabila-Carrasco}, \citenamefont {Lled{\'o}},\ and\ \citenamefont
  {Post}}]{fabila2018spectral}%
  \BibitemOpen
  \bibfield  {author} {\bibinfo {author} {\bibfnamefont {J.~S.}\ \bibnamefont
  {Fabila-Carrasco}}, \bibinfo {author} {\bibfnamefont {F.}~\bibnamefont
  {Lled{\'o}}},\ and\ \bibinfo {author} {\bibfnamefont {O.}~\bibnamefont
  {Post}},\ }\bibfield  {title} {\bibinfo {title} {Spectral gaps and discrete
  magnetic laplacians},\ }\href@noop {} {\bibfield  {journal} {\bibinfo
  {journal} {Linear Algebra Appl.}\ }\textbf {\bibinfo {volume} {547}},\
  \bibinfo {pages} {183} (\bibinfo {year} {2018})}\BibitemShut {NoStop}%
\bibitem [{\citenamefont {Gleich}(2015)}]{gleich2015matlabbgl}%
  \BibitemOpen
  \bibfield  {author} {\bibinfo {author} {\bibfnamefont {D.}~\bibnamefont
  {Gleich}},\ }\href@noop {} {\bibinfo {title} {The matlabbgl matlab library}}
  (\bibinfo {year} {2015})\BibitemShut {NoStop}%
\bibitem [{\citenamefont {Leskovec}\ and\ \citenamefont
  {Mcauley}(2012)}]{leskovec2012learning}%
  \BibitemOpen
  \bibfield  {author} {\bibinfo {author} {\bibfnamefont {J.}~\bibnamefont
  {Leskovec}}\ and\ \bibinfo {author} {\bibfnamefont {J.}~\bibnamefont
  {Mcauley}},\ }\bibfield  {title} {\bibinfo {title} {Learning to discover
  social circles in ego networks},\ }\href@noop {} {\bibfield  {journal}
  {\bibinfo  {journal} {Adv. Neural Inf. Process. Syst.}\ }\textbf {\bibinfo
  {volume} {25}} (\bibinfo {year} {2012})}\BibitemShut {NoStop}%
\bibitem [{\citenamefont {Leskovec}\ \emph {et~al.}(2007)\citenamefont
  {Leskovec}, \citenamefont {Kleinberg},\ and\ \citenamefont
  {Faloutsos}}]{leskovec2007graph}%
  \BibitemOpen
  \bibfield  {author} {\bibinfo {author} {\bibfnamefont {J.}~\bibnamefont
  {Leskovec}}, \bibinfo {author} {\bibfnamefont {J.}~\bibnamefont
  {Kleinberg}},\ and\ \bibinfo {author} {\bibfnamefont {C.}~\bibnamefont
  {Faloutsos}},\ }\bibfield  {title} {\bibinfo {title} {Graph evolution:
  Densification and shrinking diameters},\ }\href@noop {} {\bibfield  {journal}
  {\bibinfo  {journal} {ACM Trans. Knowl. Discov. Data}\ }\textbf {\bibinfo
  {volume} {1}},\ \bibinfo {pages} {2} (\bibinfo {year} {2007})}\BibitemShut
  {NoStop}%
\bibitem [{\citenamefont {Turk}\ and\ \citenamefont
  {Levoy}(1994)}]{turk1994zippered}%
  \BibitemOpen
  \bibfield  {author} {\bibinfo {author} {\bibfnamefont {G.}~\bibnamefont
  {Turk}}\ and\ \bibinfo {author} {\bibfnamefont {M.}~\bibnamefont {Levoy}},\
  }\bibfield  {title} {\bibinfo {title} {Zippered polygon meshes from range
  images},\ }in\ \href@noop {} {\emph {\bibinfo {booktitle} {Proc. 21st Annu.
  Conf. Comput. Graph. Interact. Tech.}}}\ (\bibinfo {year} {1994})\ pp.\
  \bibinfo {pages} {311--318}\BibitemShut {NoStop}%
\bibitem [{\citenamefont {Perraudin}\ \emph {et~al.}(2014)\citenamefont
  {Perraudin}, \citenamefont {Paratte}, \citenamefont {Shuman}, \citenamefont
  {Martin}, \citenamefont {Kalofolias}, \citenamefont {Vandergheynst},\ and\
  \citenamefont {Hammond}}]{perraudin2014gspbox}%
  \BibitemOpen
  \bibfield  {author} {\bibinfo {author} {\bibfnamefont {N.}~\bibnamefont
  {Perraudin}}, \bibinfo {author} {\bibfnamefont {J.}~\bibnamefont {Paratte}},
  \bibinfo {author} {\bibfnamefont {D.}~\bibnamefont {Shuman}}, \bibinfo
  {author} {\bibfnamefont {L.}~\bibnamefont {Martin}}, \bibinfo {author}
  {\bibfnamefont {V.}~\bibnamefont {Kalofolias}}, \bibinfo {author}
  {\bibfnamefont {P.}~\bibnamefont {Vandergheynst}},\ and\ \bibinfo {author}
  {\bibfnamefont {D.~K.}\ \bibnamefont {Hammond}},\ }\bibfield  {title}
  {\bibinfo {title} {Gspbox: A toolbox for signal processing on graphs},\
  }\href@noop {} {\bibfield  {journal} {\bibinfo  {journal} {arXiv preprint
  arXiv:1408.5781}\ } (\bibinfo {year} {2014})}\BibitemShut {NoStop}%
\end{thebibliography}%

\end{document}